\documentclass[a4paper,10pt]{siamart220329}

\usepackage{graphicx}
\usepackage{color}
\usepackage{xcolor}
\usepackage{comment}
\usepackage{amsmath,mathtools}
\usepackage{amsfonts}
\usepackage{mathrsfs}
\usepackage{color}
\usepackage{algorithm}
\usepackage{algorithmic}
\usepackage{siunitx}
\usepackage{multirow}
\usepackage{stackrel}
\usepackage{booktabs}
\usepackage{tabularx}
\usepackage{arydshln}
\usepackage{cite}
\usepackage{url}
\usepackage{enumitem}

\definecolor{airforceblue}{rgb}{0.36, 0.24, 1.00}
        \definecolor{aqua}{rgb}{0.0, 1.0, 1.0}
        \definecolor{brightgreen}{rgb}{0.4, 1.0, 0.0}
        \definecolor{darkgreen}{rgb}{0.4, 0.7, 0.0}
       \definecolor{mygreen}{rgb}{0.1,.7,0.1}
       \definecolor{mygreen}{rgb}{0,0,0}
       \definecolor{red}{rgb}{0,0,0}

\newcommand{\red}{\color{red}}
\renewcommand{\r}{}


\newcommand{\tensorOne}[1]{\boldsymbol{#1}}
\newcommand{\tensorTwo}[1]{\boldsymbol{#1}}
\newcommand{\tensorFour}[1]{\mathbb{#1}}



\newtheorem{Theorem}{Theorem}

\newtheorem{Lemma}{Lemma}

\newtheorem{Remark}{Remark}

\newcommand{\R}{\mathbb{R}}
\usepackage[top=3.0cm,bottom=3.0cm,left=2.4cm,right=2.4cm]{geometry}
\usepackage{lineno}

\begin{document}
\title{Triangular preconditioners for double saddle point linear systems arising in the mixed form of poroelasticity equations}
\author{Luca Bergamaschi\footnotemark[1],
Massimiliano Ferronato\footnotemark[1] \and
\'Angeles Mart\'inez\footnotemark[2]}
\date{}
\footnotetext[1]{Department of Civil Environmental and Architectural Engineering, University of Padua, Italy.
\texttt{E-mail}: \{luca.bergamaschi, massimiliano.ferronato\}@unipd.it}
\footnotetext[2]{Department of Mathematics, Informatics and Geosciences, University of Trieste, Italy,  
\texttt{E-mail}: amartinez@units.it.
Corresponding author.}
\maketitle
\begin{abstract}
In this paper, we study a class of inexact block triangular preconditioners 
for double saddle-point symmetric linear systems arising from the mixed finite element and mixed hybrid finite element
discretization of Biot's poroelasticity equations. We develop a spectral analysis of the preconditioned matrix, showing 
that the complex eigenvalues lie in a circle of center $(1,0)$ and radius smaller than 1. In contrast, the real eigenvalues are described 
in terms of the roots of a third-degree polynomial with real coefficients. 
The results of numerical experiments are reported to show
the quality of the theoretical bounds and illustrate the efficiency of the proposed preconditioners used with GMRES, 
especially in comparison with similar block diagonal preconditioning strategies along with the MINRES iteration. 

\end{abstract}

\medskip

\noindent
\textbf{AMS classification:} 65F10, 65F50, 56F08.

\smallskip

\noindent
\textbf{Keywords}: Double saddle point problems. Preconditioning. Krylov subspace methods. Biot's poroelasticity equations.

\medskip

\section{Introduction}

Given positive integer dimensions $n, m$ and $p$ with  $n \ge \max\{m,p\}$, consider the $(n+m+p) \times (n+m+p)$ double
saddle-point linear system
\begin{equation}\label{Eq1}
\mathcal{A}w\equiv\begin{bmatrix}
	A & B^{T} & 0\\
	B & -D & C^{T}\\
        0 & C & E\end{bmatrix}
	\begin{bmatrix}x\\y\\z  \end{bmatrix}=\begin{bmatrix}f\\g
\\ h\end{bmatrix}\equiv b,
\end{equation}
where $A\in \mathbb{R}^{n \times n}$ \textcolor{red}{and $E \in \mathbb{R}^{p \times p}$} are 
symmetric positive definite (SPD) matrix, $B \in \mathbb{R}^{m \times n}$ has full row 
rank, $C \in \mathbb{R}^{p \times m}$ has full rank, $D \in \mathbb{R}^{m \times m}$ is 
a square positive semi-definite (SPSD) matrix. Moreover, $f \in \mathbb{R}^{n}$, $g \in \mathbb{R}^{m}$ and $h \in \mathbb{R}^{p}$ are given vectors. Such linear systems arise in several scientific applications, including constrained least squares problems 
\cite{Yuan}, constrained quadratic programming \cite{Han}, magma-mantle dynamics \cite{Rhebergen}, to mention a few; see, 
e.g.~\cite{Chen, Cai}. Similar block structures arise in liquid crystal director modeling or the coupled Stokes-Darcy problem, and the preconditioning of such linear systems has been considered in \cite{ChenRen, Szyld, Benzi2018, BeikBenzi2022, 
Balani-et-al-2023a}. 
Symmetric positive definite block preconditioners for (\ref{Eq1}) have been
deeply studied in \cite{Bradley,Bergamaschi-COAP-2024}.

The double saddle-point structure \eqref{Eq1} also arises in the mixed form of Biot's poroelasticity equations. Biot's
model, dating back to \cite{Bio41} for its original derivation, couples Darcy's flow of a single-phase fluid with the
quasi-static mechanical deformation of an elastic, fully saturated porous medium. In its mixed form, Darcy's law is
explicitly considered as a governing equation along with the fluid mass balance, giving rise to the so-called
three-field formulation \cite{Lip02, PhiWhe07a, PhiWhe07b, JhaJua07,FerCasGam10, Hu_etal17,HonKra18}, where the solid phase 
displacements, the pore fluid pressure, and Darcy's velocity are the independent unknown quantities. Biot's poroelasticity 
model is key in several relevant applications, ranging from geosciences, such as groundwater hydrology, geothermal 
energy extraction, and geological carbon and hydrogen storage problems 
\cite{TeaFerGamGon06,LuoZen11,PanFreDouZakSheCutTer15,FerGamJanTea10,Whi_etal14,RameshKumar2023583}, to biomedicine 
\cite{Cowin99,Fri00,Kraus2023B802}. The use of a three-field mixed formulation allows for obtaining locally
mass-conservative velocity fields and proves to be particularly robust to strong jumps in the distribution
of the material parameters, especially in the permeability tensor, which makes it particularly interesting in many applications. For this reason, the three-field formulation has attracted increasing attention in recent years.

One of the most relevant issues posed by the discrete Biot model from the computational point of view is the solution
to a sequence of double saddle-point linear systems in the form \eqref{Eq1}, where the diagonal block $D$ is usually singular
or numerically close to being so, and $E$ is symmetric positive definite. Most of the applications of the three-field
formulation focused on iterative solution schemes with sequential-implicit approaches, 
e.g.~\cite{GirKumWhe16,Bot_etal17,DanWhe18,Hong2020916}, where the
poromechanical equilibrium and Darcy's flow subproblems are solved independently, iterating until convergence. Though
attractive for their simplicity and ease of implementation, sequential-implicit approaches have a two-fold limitation: (i) suitable splitting strategies, such as the so-called fixed-stress algorithm \cite{KimTchJua11a,KimTchJua11b}, 
should be employed to ensure unconditional convergence, and (ii) convergence of these methods is in any case linear. In 
recent years, an increasing interest has been shown in the development of specific classes of preconditioners to allow for the
efficient monolithic solution of the overall $3\times 3$ block double saddle-point system by Krylov subspace methods.
Spectrally equivalent block diagonal preconditioners were first advanced \cite{LeeMarWin17,TurArb14,Adl_etal20}, then a family
of block triangular and block strategies, combining physically based with fully algebraic Schur complement approximations, was
proposed \cite{CasWhiFer16,FerFraJanCasTch19,FriCasFer19,Bui_etal20,Frigo-et-al}.
A review of the block preconditioning methods proposed for the fully implicit monolithic solution of the three-field formulation of Biot's model is available in \cite{Ferronato2022}.

This work analyzes the spectral properties of the class of block triangular preconditioners developed in
\cite{Ferronato2021419,Frigo202136} for the solution of the discrete stabilized mixed and mixed hybrid finite element
formulation of Biot's poroelasticity equations. These block triangular preconditioners are based on both
algebraic and physics-based approximations of the Schur complements and generally prove quite efficient in various applications. In this paper, we derive
bounds on the eigenvalues of the corresponding preconditioned matrix, extending the results
provided in \cite{Balani-et-al-2023b}, where the simpler case with $D \equiv 0$ and $E\equiv 0$ is addressed. 
The theoretical bounds are validated on some test cases dealing with the
three-field formulation of Biot's model. The performance of the inexact block triangular preconditioner in the
GMRES acceleration ~\cite{saad1986gmres} is also compared to that of the related block diagonal variants,
which can exploit the cheaper MINRES iteration \cite{minres}. Finally, the efficiency of the proposed inexact preconditioners is tested in large-scale 3D realistic problems.

\section{Eigenvalue analysis of the inexact triangular preconditioner}
\label{sec:eig_tri}
Let us consider the prototype double saddle-point linear system \eqref{Eq1}. We define the first- and second-level
Schur complements 
\[S = D + BA^{-1}B^{T}, \;\;\; X = E + C S^{-1} C^T, \] 
and denote as $\widehat{A}, \widehat{S}$ and $\widehat{X}$ some related symmetric positive definite approximations of $A, S$, 
and $X$, respectively. In this section, we aim to analyze the eigenvalue distribution of the preconditioned matrix 
$\mathcal{A}{\mathcal{P}}^{-1}$, where
\begin{equation}\label{Eq29}
{\mathcal{P}}=\begin{bmatrix} \widehat A &B^{T}&0\\0&-\widehat S &C^{T}\\0&0& \widehat X\end{bmatrix}.
\end{equation}
The spectral properties of $\mathcal{A}{\mathcal{P}}^{-1}$ will be expressed as a function of the eigenvalues of 
$\widehat A^{-1} A, \widehat S^{-1} \widetilde{S}$ and $\widehat X^{-1} \widetilde{X}$, where 
\[ \widetilde{S} = D + B \widehat{A}^{-1} B^T, \;\;\; \widetilde{X} = E + C \widehat{S}^{-1} C^T.\]
Of course, the closer are $\widehat{A}$ and $\widehat{S}$ to $A$ and $S$, respectively, the closer are $\widetilde{S}$ and
$\widetilde{X}$ to $S$ and $X$.

Let 
\begin{equation}\label{Eq30}
{\mathcal{P}_D}=\begin{bmatrix}\widehat A&0&0\\0&\widehat S&0\\0&0&\widehat X\end{bmatrix}.
\end{equation}
Then, finding the eigenvalues of $\mathcal{A}{\mathcal{P}}^{-1}$ is equivalent to solving
$${\mathcal{P}_D}^{-\frac{1}{2}}\mathcal{A}{\mathcal{P}_D}^{-\frac{1}{2}}{w}=\lambda {\mathcal{P}_D}^{-\frac{1}{2}}{\mathcal{P}}{\mathcal{P}_D}^{-\frac{1}{2}}{w},$$
or 
\begin{equation}\label{Eq31}
	\begin{bmatrix}\overline{A}  &  R^T  &  0\\
		R  &  -\overline D  &  K^T\\
		0  &  K  & \overline E \end{bmatrix}
		\begin{bmatrix}x\\y\\z  \end{bmatrix}=\lambda \begin{bmatrix}I  &  R^T  &  0\\0  &  -I  &  K^T\\0  &  0  &  I\end{bmatrix}
\begin{bmatrix}x\\y\\z  \end{bmatrix},
\end{equation}
where $\overline{A}=\widehat{A}^{-\frac{1}{2}}A\widehat{A}^{-\frac{1}{2}},\: 
\overline{D}=\widehat{S}^{-\frac{1}{2}}D\widehat{S}^{-\frac{1}{2}},\: 
\overline{E}=\widehat{X}^{-\frac{1}{2}}E\widehat{X}^{-\frac{1}{2}},\: R =\widehat{S}^{-\frac{1}{2}}B\widehat{A}^{-\frac{1}{2}}$
and $K = \widehat{X}^{-\frac{1}{2}}C\widehat{S}^{-\frac{1}{2}}$. 
Notice that 
\begin{eqnarray} 
	R R^T &=&  \widehat S^{-\frac{1}{2}}  \left(\widetilde S -D\right) \widehat S^{-\frac{1}{2}} = \widehat S^{-\frac{1}{2}} \widetilde S \widehat S^{-\frac{1}{2}} -\overline D = \overline{S}-\overline{D}, \label{EqRR} \\
	K K^T &=&  \widehat X^{-\frac{1}{2}}  \left(\widetilde X -E\right) \widehat X^{-\frac{1}{2}} = \widehat X^{-\frac{1}{2}} \widetilde X \widehat X^{-\frac{1}{2}} -\overline E = \overline{X}-\overline{E}. \label{EqKK}
\end{eqnarray}
We define the Rayleigh quotient for a given square matrix $H$ and a nonzero vector $w$ as
\[ q(H,w) = \frac{w^* H w}{w^* w}.\]
The following indicators will be used:
\[
	\begin{array}{llll}
	\label{indA}
		0<& \gamma_{\min}^A  \equiv \lambda_{\min} (\widehat A^{-1} A),  & \gamma_{\max}^A  \equiv \lambda_{\max} (\widehat A^{-1} A), & \gamma_A(w) = q(\overline A, w) \in [ \gamma_{\min}^A,  \gamma_{\max}^A ]\equiv I_A,  \\[.6em]
		0 <& \gamma_{\min}^{S}  \equiv \lambda_{\min} (\widehat S^{-1} \widetilde{S}),  & \gamma_{\max}^{S}  \equiv \lambda_{\max} (\widehat S^{-1} \widetilde{S}), & 
	\gamma_{S}(w) = q(
		\overline{S}, w)
		\in [ \gamma_{\min}^{S},  \gamma_{\max}^{S} ] \equiv I_S,  \\[.6em]
		0 <& \gamma_{\min}^{X}  \equiv \lambda_{\min} (\widehat X^{-1} \widetilde{X}),  & \gamma_{\max}^{X} \equiv \lambda_{\max} (\widehat X^{-1} \widetilde{X}), & 
	\gamma_{X}(w) = q(
		\overline{X}, w)
		\in [ \gamma_{\min}^{X},  \gamma_{\max}^{X} ] \equiv I_X,   \\[.6em]
		0 
		\leq & \gamma_{\min}^{D}  \equiv \lambda_{\min} (\widehat S^{-1} {D}),  & \gamma_{\max}^{D}  \equiv \lambda_{\max} (\widehat S^{-1} {D}), & 
	\gamma_{D}(w) =q(\overline D ,w)
		\in [ \gamma_{\min}^{D},  \gamma_{\max}^{D} ] \equiv I_D,  \\[.6em]
		0 <& \gamma_{\min}^{E}  \equiv \lambda_{\min} (\widehat X^{-1} {E}),  & \gamma_{\max}^{E} \equiv \lambda_{\max} (\widehat X^{-1} {E}), & \gamma_{E}(w)  = 
	q(\overline E ,w)
		\in [ \gamma_{\min}^{E},  \gamma_{\max}^{E} ] \equiv I_E,   \\[.6em]
		0 <& 	\gamma_{\min}^R  \equiv \lambda_{\min} (R R^T),  & \gamma_{\max}^R  \equiv \lambda_{\max} (R R^T) & 
	\gamma_R(w)  = q(R R^T,w)
		\in [ \gamma_{\min}^R,  \gamma_{\max}^R ] \equiv I_R,  \\[.6em]
		0 \le& 	\label{indK}
		\gamma_{\min}^K  \equiv \lambda_{\min} (K K^T),  & \gamma_{\max}^K  \equiv \lambda_{\max} (K K^T) & 
	\gamma_K(w)  = q(K K^T,w) \in [ \gamma_{\min}^K,  \gamma_{\max}^K ] \equiv I_K.
\end{array}
\]
Notice that zero can be a lower bound only for $\widehat{S}^{-1}D$, because $D$ can be singular, and $K K^T$, which is 
rank-deficient if $m < p$. From \eqref{EqRR}-\eqref{EqKK} and the definitions above, it follows immediately
\[ \gamma_S(w) = \gamma_R(w) + \gamma_D(w), \quad \gamma_X(w) = \gamma_K(w) + \gamma_E(w). \]
In the sequel, we will remove the argument $w$ whenever an indicator $\gamma_{*}$ will be used to light the notation.
Finally, we assume that $ 1\in [\gamma_{\min}^A, \gamma_{\max}^A$]. This assumption, which is very common in practice, will 
simplify some of the bounds that follow.

Exploiting \eqref{Eq31} yields:
\begin{align}
&\overline{A}x-\lambda x=(\lambda -1) R^Ty, \label{Eq32}\\
&Rx-\overline D y - (\lambda -1)K^Tz=-\lambda y, \label{Eq33}\\
&Ky + \overline E z =\lambda z. \label{Eq34}
\end{align}
Before characterizing both truly complex and real eigenvalues, we briefly analyze some particular cases 
where it is easy to show that $\lambda\in\mathbb{R}$.
Since $\widehat A, \widehat S$ and $\widehat X$ are SPD, and $B$ has full row rank
, then also $\overline{A}$ is SPD and $R$ has full row rank. 
Independently of the values of $m, p < n$, we can have:
  \begin{itemize}[leftmargin=20pt,topsep=5pt]
	  \item[]
		  \fbox {${x \ne 0 \land  y = 0}$} $\lambda \in [\gamma_{\min}^{A},\gamma_{\max}^{A}] \cap 
		  [\gamma_{\min}^{E},\gamma_{\max}^{E}] \in \R$ since it is either $\lambda=
		  \gamma_A(x)$
		  or $\lambda=
		  \gamma_E(z)$, if $z \ne 0$;
	  \item[]
		  \fbox {${x \ne 0 \land  Rx = 0}$} $\lambda \in [\gamma_{\min}^{A},\gamma_{\max}^{A}] \in \R$ because
		  premultiplying (\ref{Eq32}) by $x^T$ gives $\lambda = 
		  \gamma_A(x)$.
  \end{itemize}
To handle the remaining particular cases, we distinguish the two situations ($m \ge p$ and $m < p$). Recall that
$C$ has rank equal to $\min\{m,p\}$ and if $m \ge p$ ($m < p$), then $K (K^T)$ is full row rank.
\medskip

\noindent
\textbf{Case $\mathbf {m \ge p}$}.

  \begin{itemize}[leftmargin=20pt,topsep=5pt]
	  \item[]
		  \fbox {${x = 0}$} $\lambda = 1 \in \R$, because it must be $y\ne0$, otherwise this also would imply $z = 0$;
	  \item[]
		  \fbox {${x\ne 0, \ y \ne 0, \ K y= 0, \ z \ne 0}$} $\lambda \in [\gamma_{\min}^{E},\gamma_{\max}^{E}] \in \R$ 
		  since $\lambda = 
		  \gamma_E(z)$.
  \end{itemize}

\medskip

\noindent
\textbf{Case $\mathbf {m < p}$}.

  \begin{itemize}[leftmargin=20pt,topsep=5pt]
	  \item[]  
		  \fbox {${x = 0}$} $\lambda \in \{1\} \cup [\gamma_{\min}^{E},\gamma_{\max}^{E}] \in \R$, because it is
		  either $\lambda = 1$ from \eqref{Eq32}, or $K^T z = 0$ with $z \ne 0$, which implies $\lambda = 
		  \gamma_E(z)$;
	  \item[]
		  \fbox {${x \ne 0, \ K^T z= 0}$} $\lambda \in [\gamma_{\min}^{E},\gamma_{\max}^{E}] \in \R$ since it must
		  be $z \ne 0$ and $\lambda = 
		  \gamma_E(z)$.
\end{itemize}
In all other situations, $\lambda$ is generally complex. In such cases, the real and imaginary parts satisfy the
bounds that follow.

\subsection{Complex eigenvalues}
We first characterize the eigenvalues with non-zero imaginary part.

\begin{Theorem}\label{Th6}
Let $\lambda = a + i b$ be a complex eigenvalue of $\mathcal{A}{\mathcal{P}}^{-1}$, with $a$ the real part, $b$ the imaginary
part, $i$ the imaginary unit, and $(x; y; z)$ the corresponding eigenvector. Then, 
if $0\leq\gamma_{\min}^D<1$ any complex eigenvalue is such that
\begin{equation}
	\label{bound_on_modulus}
	|\lambda-1|  \le  \sqrt{ 1 - \rho_{\min}}, 
	\qquad \text{where} \quad
	\rho_{\min} = \frac{\gamma^A_{\min}\|x\|^2 + \gamma_{\min}^D \|y\|^2 + \gamma_{\min}^E \|z\|^2}
	{\|y\|^2 +\gamma^E_{\min}\|z\|^2}, 
\end{equation}
otherwise, all eigenvalues are real.
\end{Theorem}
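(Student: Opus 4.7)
The plan is to reduce the three block equations \eqref{Eq32}--\eqref{Eq34} to a single scalar complex identity in $\lambda$ and the squared norms $\|x\|^2,\|y\|^2,\|z\|^2$, and then to split that identity into real and imaginary parts. I premultiply \eqref{Eq32}, \eqref{Eq33}, \eqref{Eq34} by $x^*$, $y^*$, $z^*$ respectively, abbreviate $\gamma_A=\gamma_A(x)$, $\gamma_D=\gamma_D(y)$, $\gamma_E=\gamma_E(z)$ (all real because $\overline A,\overline D,\overline E$ are Hermitian), and use \eqref{Eq32} and \eqref{Eq34} to solve for $x^*R^Ty$ and $z^*Ky$ respectively, which is possible because $\lambda\ne 1$ when $\lambda$ is complex. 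Conjugating and substituting into \eqref{Eq33}, then multiplying through by $\bar\lambda-1$, yields the single identity
\[
(\gamma_A-\bar\lambda)\|x\|^2+(\lambda-\gamma_D)(\bar\lambda-1)\|y\|^2=|\lambda-1|^2(\bar\lambda-\gamma_E)\|z\|^2.
\]

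Next I would write $\lambda=a+ib$ with $b\ne 0$ and take the imaginary part of this identity. Dividing through by $b$ produces
\[
(1-\gamma_D)\|y\|^2=\|x\|^2+|\lambda-1|^2\|z\|^2.\qquad(\ast)
\]
If $\gamma_{\min}^D\ge 1$ then the left-hand side of $(\ast)$ is nonpositive while the right-hand side is nonnegative, so both must vanish; together with $|\lambda-1|>0$ this forces $x=z=0$, and then \eqref{Eq32} combined with the full row rank of $R$ forces $y=0$, contradicting the definition of an eigenvector. This establishes the ``otherwise, all eigenvalues are real'' clause. Assuming now $\gamma_{\min}^D<1$, I take the real part of the displayed identity and use $(\ast)$ to rewrite the awkward $(a-1)(1-\gamma_D)\|y\|^2$ term as $(a-1)[\|x\|^2+|\lambda-1|^2\|z\|^2]$. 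After the resulting cancellations one obtains the second clean identity
\[
|\lambda-1|^2\|y\|^2=(1-\gamma_A)\|x\|^2+(1-\gamma_E)|\lambda-1|^2\|z\|^2.\qquad(\ast\ast)
\]

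The remaining step is to combine $(\ast)$ and $(\ast\ast)$ to expose the exact structure of $1-\rho_{\min}$. Adding $\gamma_{\min}^E|\lambda-1|^2\|z\|^2$ to both sides of $(\ast\ast)$ manufactures the denominator $\|y\|^2+\gamma_{\min}^E\|z\|^2$ on the left, while the new $\|z\|^2$-coefficient on the right becomes $1-\gamma_E+\gamma_{\min}^E\le 1$. Bounding this coefficient above by $1$ and then using $(\ast)$ to substitute $|\lambda-1|^2\|z\|^2=(1-\gamma_D)\|y\|^2-\|x\|^2$ collapses the right-hand side to $-\gamma_A\|x\|^2+(1-\gamma_D)\|y\|^2$. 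The monotonicity bounds $\gamma_A\ge\gamma_{\min}^A$ and $\gamma_D\ge\gamma_{\min}^D$ then give $-\gamma_A\|x\|^2+(1-\gamma_D)\|y\|^2\le -\gamma_{\min}^A\|x\|^2+(1-\gamma_{\min}^D)\|y\|^2$, and dividing by the positive quantity $\|y\|^2+\gamma_{\min}^E\|z\|^2$ is, by direct computation, exactly $1-\rho_{\min}$.

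The main obstacle is spotting the correct two-step manipulation: neither $(\ast)$ nor $(\ast\ast)$ alone displays both the denominator $\|y\|^2+\gamma_{\min}^E\|z\|^2$ and a numerator assembled from $\gamma_{\min}^A$ and $\gamma_{\min}^D$. The trick is first to introduce $\gamma_{\min}^E\|z\|^2$ by hand on the left of $(\ast\ast)$ to match the target denominator, and then to invoke $(\ast)$ a second time on the right to trade $|\lambda-1|^2\|z\|^2$ for $(1-\gamma_D)\|y\|^2-\|x\|^2$, which is precisely what exposes the mixed $\gamma_A$--$\gamma_D$ term on which the minima can finally be inserted.
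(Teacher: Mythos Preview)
Your proof is correct and follows essentially the same route as the paper: both derive the same scalar complex identity (yours is the conjugate of the paper's (\ref{Eq47.1})), split it into real and imaginary parts, and obtain the same relation $(\ast)$ from the imaginary part. Your identity $(\ast\ast)$ is algebraically equivalent to the paper's exact formula $\rho=\dfrac{\gamma_A\|x\|^2+\gamma_D\|y\|^2+\gamma_E\|z\|^2}{\|y\|^2+\gamma_E\|z\|^2}$, just rewritten. The only genuine difference is the final minimization in $\gamma_E$: the paper argues that the function $h(t)=\dfrac{\gamma_{\min}^A\|x\|^2+\gamma_{\min}^D\|y\|^2+t\|z\|^2}{\|y\|^2+t\|z\|^2}$ is increasing on $I_E$ (which uses the standing assumption $\gamma_{\min}^A\le 1$), whereas you bypass this monotonicity argument by adding $\gamma_{\min}^E|\lambda-1|^2\|z\|^2$ to both sides of $(\ast\ast)$, bounding $1-\gamma_E+\gamma_{\min}^E\le 1$, and then re-invoking $(\ast)$. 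Your manipulation is slightly more direct and does not explicitly require $\gamma_{\min}^A\le 1$; otherwise the two proofs coincide.
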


\begin{proof}
With no loss of generality, we assume that $(x; y; z)$ is such that $\|x\|^2+\|y\|^2+\|z\|^2=1.$
By multiplying \eqref{Eq32} by $x^*$ on the left, the transposed conjugate of \eqref{Eq33} by $y$ on the right, and \eqref{Eq34} 
by $z^*$ on the left, we have:
\begin{align}
&x^*\overline{A}x-\lambda \|x\|^2=(\lambda -1)x^* R^Ty, \label{Eq44}\\
&x^*R^Ty-y^* \overline D y - (\bar{\lambda}-1)z^*Ky=-\bar{\lambda} \|y\|^2, \label{Eq45}\\
&z^*Ky + z^* \overline E z =\lambda \|z\|^2.\label{Eq46}
\end{align}
Eliminating $x^*R^Ty$ from \eqref{Eq45} and \eqref{Eq44} yields
\[ x^*\overline{A}x-\lambda \|x\|^2=(\lambda -1)\left(y^* \overline D y + (\bar{\lambda}-1)z^*Ky-\bar{\lambda} \|y\|^2\right), \]
which, recalling from \eqref{Eq46} that $z^*Ky = -z^* \overline E z +\lambda \|z\|^2$, after some algebra provides
\begin{equation}\label{Eq47}
	x^*\overline{A}x-\lambda\|x\|^2 = (y^* \overline D y -\bar \lambda \|y\|^2)  (\lambda -1)  + |\lambda-1|^2  \left(\lambda  \|z\|^2-z^* E z\right), 
\end{equation}
i.e., 
\begin{equation}\label{Eq47.1}
	(\gamma_A -\lambda)\|x\|^2  = (\gamma_D -\bar \lambda)  (\lambda -1) \|y\|^2 + |\lambda-1|^2  \left(\lambda -\gamma_E \right) \|z\|^2. 
\end{equation} 
	Let us 
	distinguish between the real and imaginary parts of equation \eqref{Eq47.1}:
\begin{align}
	(\gamma_A -a)\|x\|^2 &= (\gamma_D a - \gamma_D - |\lambda|^2 + a) \|y\|^2 
	+ |\lambda-1|^2  \left(a -\gamma_E \right) \|z\|^2, & &\text{(real part)}, \label{real} \\
	-b\|x\|^2 &= b \gamma_D \|y\|^2 - b \|y\|^2 + |\lambda-1|^2  b \|z\|^2, & &\text{(imaginary part)}. \label{imag}
\end{align}
If $\lambda$ is complex, then $b \ne 0$ and from (\ref{imag})  we obtain
	\begin{equation}
		\label{modulus}
  |\lambda -1 |^2 = \frac{\|y\|^2 (1 - \gamma_D)-\|x\|^2}{\|z\|^2}, 
		\end{equation}
	Equation \eqref{modulus} requires $0\leq \gamma_{\min}^D<1$, otherwise no complex eigenvalue occurs. 
	Defining the quantity
\begin{equation} 
	\label{rho}
\rho = 1- |\lambda-1|^2  = 2a - |\lambda|^2,
\end{equation}
and substituting (\ref{modulus}) in (\ref{real}) we have
	\begin{eqnarray*}
0 & = &  (\gamma_A -a)\|x\|^2  - (\gamma_D a - \gamma_D - |\lambda|^2 + a) \|y\|^2 - |\lambda-1|^2  \left(a -\gamma_E \right) \|z\|^2 \\
		& = &  (\gamma_A -a)\|x\|^2  - (\gamma_D a - \gamma_D - |\lambda|^2 + a) \|y\|^2 - (a -\gamma_E)
		\left( \|y\|^2 (1 - \gamma_D)-\|x\|^2\right ) \\
		& = &  (\gamma_A -\gamma_E)\|x\|^2  - (\gamma_D a - \gamma_D - |\lambda|^2 + a + a - \gamma_E - a \gamma_D  + \gamma_E \gamma_D) \|y\|^2  \\
		& = &  (\gamma_A -\gamma_E)\|x\|^2  + (\gamma_D + \gamma_E - \gamma_E \gamma_D - \rho) \|y\|^2, \\
		\end {eqnarray*}
from which
\begin{eqnarray*}\label{Eq51}
	\rho  &=& \gamma_D +  \gamma_E - \gamma_E \gamma_D +  (\gamma_A -\gamma_E)\frac{\|x\|^2}{\|y\|^2}  \\
	 &=& \frac{\gamma_D \|y\|^2 + \gamma_E  \|y\|^2- \gamma_E \gamma_D \|y\|^2 +  (\gamma_A -\gamma_E)\|x\|^2}{\|y\|^2}  \\
	 & = & \frac{\gamma_E(\|y\|^2 (1- \gamma_D) - \|x\|^2) + \gamma_A \|x\|^2 + \gamma_D \|y\|^2}{\|y\|^2}.
\end{eqnarray*}
		Then, by using again (\ref{modulus}), we get
		\begin{equation}
	 \rho =  \frac{\gamma_E\|z\|^2 |\lambda-1|^2 + \gamma_A \|x\|^2 + \gamma_D \|y\|^2}{\|y\|^2},
		\end{equation}
		which implies $\rho > 0$. Moreover, by using the definition of $\rho = 1-|\lambda-1|^2$ we have
		\begin{equation} \label{rho_gE}
	 \rho =  \frac{\gamma_A \|x\|^2 +\gamma_D \|y\|^2 + \gamma_E \|z\|^2}{\|y\|^2 + \gamma_E \|z\|^2}
			\ge  \frac{\gamma_{\min}^A \|x\|^2 +\gamma_{\min}^D \|y\|^2 + \gamma_E \|z\|^2}{\|y\|^2 + \gamma_E \|z\|^2}.
		\end{equation}
		We now show that the right-hand side of \eqref{rho_gE} is an increasing function in the variable $\gamma_E$. 
		We have:
		\begin{eqnarray*}
			h(t) &\equiv &
                        \frac{\gamma_{\min}^A \|x\|^2 +\gamma_{\min}^D \|y\|^2 + t \|z\|^2}{\|y\|^2 + t \|z\|^2} = \\
& = &	1 + \frac{\overbrace{\|x\|^2  +(\gamma_{\min}^D -1) \|y\|^2 + (\gamma_{\min}^A-1) \|x\|^2}^q}
			 {\|y\|^2 + t \|z\|^2} \equiv 1 + \frac{q}{\|y\|^2 + t \|z\|^2}.
		\end{eqnarray*} 
		Recalling equation (\ref{modulus}) and $\gamma^{\min}_A < 1$, we can conclude that $q < 0$, hence 
		$h(t)$ is an increasing function for $t\in[\gamma_{\min}^E,\gamma_{\max}^E]$. 
		By setting:
		\begin{equation*}
			\rho_{\min} = \min_{t\in I_E} h(t) \equiv 
			\frac{\gamma_{\min}^A \|x\|^2 +\gamma_{\min}^D \|y\|^2 + \gamma_{\min}^E \|z\|^2}{\|y\|^2 + \gamma_{\min}^E \|z\|^2} < 1,
		\end{equation*}
		from equation \eqref{rho_gE} we have $1-|\lambda-1|^2\geq\rho_{\min}$, which finally establishes the
		bound \eqref{bound_on_modulus}.
%
%
\end{proof}

In essence, Theorem \ref{Th6} states that the complex eigenvalues, if any, of the preconditioned matrix 
$\mathcal{A}\mathcal{P}^{-1}$ lie in a circle centered in 1 and with radius $\sqrt{1-\rho_{\min}}$, being $0<\rho_{\min}<1$.
This limits also the real part $a$ of any complex $\lambda$, such that $a\in[1-\sqrt{1-\rho_{\min}},1+\sqrt{1-\rho_{\min}}]$.
Moreover, by using equation \eqref{rho} $a$ can also be written as
\[ a = \frac{|\lambda|^2 - |\lambda-1|^2 + 1}{2} = \frac{|\lambda|^2 + \rho}{2} \ge \frac {\rho}{2} \ge \frac{\rho_{\min}}{2}, \]
which implies that $a$ cannot be arbitrarily small.
\textcolor{mygreen}{
\begin{Remark}
        A more practical bound on the radius of the circle containing complex eigenvalues can be obtained if we assume
	$\gamma_{\min}^D > 0$. Considering the function $h(t)$, we have
                \begin{eqnarray*} \rho_{\min}&=& \frac{\gamma^A_{\min}\|x\|^2 + \gamma_{\min}^D \|y\|^2 + \gamma_{\min}^E \|z\|^2}
			{\|y\|^2 +\gamma^E_{\min}\|z\|^2} \equiv  h(\gamma^{min}_E) \ge h(0) = 
\frac{\gamma^A_{\min}\|x\|^2 + \gamma_{\min}^D \|y\|^2}
        {\|y\|^2}  \ge \gamma_{\min}^D. \end{eqnarray*}
\end{Remark}
}

\subsection{Real eigenvalues}
In the following, we aim to characterize the real eigenvalues of the preconditioned matrix not contained in
$[\gamma^A_{\min},\gamma^A_{\max}]$ or $[\gamma^E_{\min},\gamma^E_{\max}]$. To this end, we first introduce three technical
lemmas that will be useful in the analysis
that follows.

\begin{Lemma}\cite[Lemma 1]{BergaNLAA10}\label{Le1}
	Let $\lambda \notin  [\gamma^A_{\min},\gamma^A_{\max}]$. Then, for an arbitrary vector $s\in\mathbb{R}^n$, $s \neq 0$, 
	there exists a vector $w\in\mathbb{R}^n$, $w \neq 0$, such that 
\begin{equation*}
\frac{s^T(\overline{A}-\lambda I)^{-1}s}{s^Ts}=\Big(\frac{w^T\overline{A}w}{w^Tw}-\lambda \Big)^{-1}=(\gamma_A-\lambda)^{-1}.
\end{equation*}
\end{Lemma}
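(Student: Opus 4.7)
The plan is to work in the eigenbasis of $\overline{A}$. Since $\overline{A}$ is SPD, write $\overline{A}=U\Lambda U^T$ with eigenvalues $\mu_i\in[\gamma^A_{\min},\gamma^A_{\max}]$, set $\tilde s=U^T s$, and expand
\[
\frac{s^T(\overline{A}-\lambda I)^{-1}s}{s^Ts} \;=\; \sum_{i=1}^n \frac{\alpha_i}{\mu_i-\lambda}, \qquad \alpha_i=\frac{\tilde s_i^2}{\|s\|^2},
\]
where the $\alpha_i\ge 0$ sum to one, i.e., they form a discrete probability distribution. This turns the Rayleigh quotient of the shifted inverse into a convex combination of the scalar values $1/(\mu_i-\lambda)$.

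Next, I would exploit the hypothesis $\lambda\notin[\gamma^A_{\min},\gamma^A_{\max}]$: every denominator $\mu_i-\lambda$ has the same sign, so each $1/(\mu_i-\lambda)$ lies in the closed real interval with endpoints $1/(\gamma^A_{\min}-\lambda)$ and $1/(\gamma^A_{\max}-\lambda)$. The convex combination therefore belongs to the same interval. Since the map $\gamma\mapsto 1/(\gamma-\lambda)$ is continuous and strictly monotonic on $[\gamma^A_{\min},\gamma^A_{\max}]$ (again because $\lambda$ lies outside), the intermediate value theorem supplies a $\gamma\in[\gamma^A_{\min},\gamma^A_{\max}]$ with
\[
\frac{1}{\gamma-\lambda} \;=\; \sum_{i=1}^n \frac{\alpha_i}{\mu_i-\lambda}.
\]

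To close the argument, I would realize this $\gamma$ as a Rayleigh quotient. Let $u_{\min},u_{\max}$ be unit eigenvectors of $\overline{A}$ at $\gamma^A_{\min}$ and $\gamma^A_{\max}$, and write $\gamma=t\gamma^A_{\min}+(1-t)\gamma^A_{\max}$ for some $t\in[0,1]$. Then the nonzero vector $w=\sqrt{t}\,u_{\min}+\sqrt{1-t}\,u_{\max}$ satisfies $\|w\|^2=1$ and, by orthonormality, $w^T\overline{A}w=t\gamma^A_{\min}+(1-t)\gamma^A_{\max}=\gamma$, so that $q(\overline{A},w)=\gamma_A=\gamma$, yielding the claimed identity.

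The argument is elementary throughout; the only point that deserves care is the common-sign observation that makes the intermediate value theorem applicable. Without the assumption $\lambda\notin[\gamma^A_{\min},\gamma^A_{\max}]$ the summands $1/(\mu_i-\lambda)$ could have opposite signs and the target function $1/(\gamma-\lambda)$ could fail to be defined or monotonic on the spectrum interval, so this hypothesis is genuinely the load-bearing one.
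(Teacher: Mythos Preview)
The paper does not supply its own proof of this lemma; it simply quotes it from \cite{BergaNLAA10}. Your argument is correct and is essentially the standard one: diagonalize $\overline{A}$, express the Rayleigh quotient of $(\overline{A}-\lambda I)^{-1}$ as a convex combination of the values $1/(\mu_i-\lambda)$, use the hypothesis $\lambda\notin[\gamma^A_{\min},\gamma^A_{\max}]$ to ensure these all have the same sign so the combination lies between the two extreme values, and then invoke continuity of $\gamma\mapsto 1/(\gamma-\lambda)$ on the spectral interval to produce the required $\gamma$. The final step realising $\gamma$ as a Rayleigh quotient via a combination of extremal eigenvectors is a clean way to exhibit the vector $w$ explicitly.
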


\begin{Lemma}\label{Le2}
Let $p(x)$ be the polynomial 
	\[p(x)=x^3-a_2x^2+a_1x-a_0, \qquad a_0,a_1,a_2 >0, \] 
and $\alpha = \min\left \{a_2,\dfrac{a_0}{a_1}\right\}$, $\beta = \max\left \{a_2,\dfrac{a_0}{a_1}\right\}$. Then, 
	$p(x) < 0 \: \forall x \in (0,\alpha)$ and $p(x) > 0 \: \forall  x > \beta.$
\end{Lemma}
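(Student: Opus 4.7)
The plan is to prove the lemma by a simple rewriting of $p(x)$ that exposes two linear factors whose signs can be tracked individually. Specifically, I would split the cubic as
\[
p(x) = x^3 - a_2 x^2 + a_1 x - a_0 = x^2(x - a_2) + a_1\!\left(x - \tfrac{a_0}{a_1}\right),
\]
which is valid because $a_1 > 0$. This form is useful because the two terms share the same monotonicity structure: each is the product of a nonnegative quantity ($x^2 \ge 0$ in the first, $a_1 > 0$ in the second) with an affine factor that changes sign at $a_2$ and $a_0/a_1$, respectively.

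The second step is to read off the sign of $p(x)$ in the two relevant ranges. For $x \in (0,\alpha)$, since $\alpha \le a_2$ and $\alpha \le a_0/a_1$, both $(x - a_2)$ and $(x - a_0/a_1)$ are strictly negative; moreover $x^2 > 0$ and $a_1 > 0$, so both summands are strictly negative and $p(x) < 0$. For $x > \beta$, since $\beta \ge a_2$ and $\beta \ge a_0/a_1$, both affine factors are strictly positive, so both summands are strictly positive and $p(x) > 0$.

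There is no real obstacle here: the crux is just noticing the right algebraic decomposition. The only subtlety is that in the range $x \in (0,\alpha)$ one needs $x > 0$ (not merely $x \ne 0$) so that $x^2(x - a_2) \le 0$ and the two summands reinforce rather than cancel; but this is given by hypothesis. It is also worth remarking, parenthetically, that no assumption is needed on the location of the roots of $p$ inside $[\alpha,\beta]$, which is consistent with the fact that the lemma says nothing about the sign of $p$ on that interval.
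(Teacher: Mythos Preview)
Your proof is correct and is essentially identical to the paper's own argument: the paper also decomposes $p(x)$ as the sum of $x^3 - a_2 x^2 = x^2(x - a_2)$ and $a_1 x - a_0$, noting that the first term is negative on $(0,a_2)$ and positive for $x > a_2$, while the second changes sign at $a_0/a_1$. Your write-up is slightly more explicit, but the decomposition and the reasoning are the same.
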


\begin{proof}
The statement of the lemma comes from observing that $p(x)$ is the sum of the term $x^3 - a_2x^2$,
which is negative in $(0, a_2)$ and positive for $x > a_2$, and of the term $a_1x - a_0$, which monotonically increases
with $x$ and changes sign for $x = \dfrac{a_0}{a_1}.$
\end{proof}



\begin{Lemma}\label{Le4}
	Let $\zeta \in \mathbb{R}$ be either  $0 < \zeta < \min\left\{\gamma_{\min}^A, \dfrac{\gamma_{\min}^R}{\gamma_{\max}^A+\gamma_{\min}^R+ \gamma^D_{\max}}\right\}$ or $\zeta \ge \gamma_{\max}^A+\gamma_{\max}^S$.
	Then, all the eigenvalues of the symmetric matrix $Z\in\mathbb{R}^{m\times m}$
\begin{equation}\label{Eq622}
Z(\zeta) = (1-\zeta )  R (\zeta I - \overline A)^{-1} R^T - \overline D + \zeta  I,
\end{equation}
are either positive or negative.
\end{Lemma}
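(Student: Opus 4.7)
The plan is to analyze the quadratic form $v^{T} Z(\zeta) v$ for an arbitrary nonzero $v \in \mathbb{R}^{m}$ and show it has a single sign throughout each of the two admissible ranges of $\zeta$. I would first rewrite
\[ v^{T} Z(\zeta) v = (1-\zeta)\,(R^{T} v)^{T}(\zeta I - \overline{A})^{-1}(R^{T} v) + (\zeta - \gamma_{D}(v))\,\|v\|^{2}. \]
Since $R$ has full row rank, $R^{T} v \neq 0$ whenever $v \neq 0$.

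In both admissible ranges, $\zeta \notin [\gamma^{A}_{\min},\gamma^{A}_{\max}]$, so Lemma \ref{Le1} (applied with $s = R^{T} v$ and $\lambda = \zeta$) provides $w \in \mathbb{R}^{n}$ with
\[ (R^{T} v)^{T}(\zeta I - \overline{A})^{-1}(R^{T} v) = \frac{\|R^{T}v\|^{2}}{\zeta - \gamma_{A}(w)}. \]
From \eqref{EqRR} and the identity $\gamma_{S} = \gamma_{R} + \gamma_{D}$, one has $\|R^{T} v\|^{2} = \gamma_{R}(v)\,\|v\|^{2}$. Abbreviating $\gamma_{A} = \gamma_{A}(w)$, $\gamma_{R} = \gamma_{R}(v)$, $\gamma_{D} = \gamma_{D}(v)$, $\gamma_{S} = \gamma_{S}(v)$, the quadratic form becomes
\[ \frac{v^{T} Z(\zeta) v}{\|v\|^{2}} = \frac{(1-\zeta)\gamma_{R}}{\zeta-\gamma_{A}} + \zeta - \gamma_{D}. \]
Multiplying by $(\zeta - \gamma_{A})$ and simplifying via $\gamma_{S}=\gamma_{R}+\gamma_{D}$ yields
\[ \frac{(\zeta - \gamma_{A})\,v^{T} Z(\zeta) v}{\|v\|^{2}} = (1-\zeta)\gamma_{R} + (\zeta-\gamma_{D})(\zeta-\gamma_{A}) = \zeta(\zeta - \gamma_{A} - \gamma_{S}) + \gamma_{R} + \gamma_{A}\gamma_{D} \equiv \phi(\zeta). \]
Thus the sign of $v^{T} Z(\zeta) v$ is the product of the signs of $\zeta - \gamma_{A}$ and $\phi(\zeta)$.

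For the large-$\zeta$ range $\zeta \ge \gamma_{\max}^{A} + \gamma_{\max}^{S}$, I have $\zeta - \gamma_{A} > 0$ and $\zeta - \gamma_{A} - \gamma_{S} \ge 0$, so $\phi(\zeta) \ge \gamma_{R} + \gamma_{A}\gamma_{D} \ge \gamma_{\min}^{R} > 0$, whence $v^{T} Z(\zeta) v > 0$ and $Z(\zeta)$ is positive definite. For the small-$\zeta$ range $0 < \zeta < \min\{\gamma_{\min}^{A},\,\gamma_{\min}^{R}/(\gamma_{\max}^{A}+\gamma_{\min}^{R}+\gamma_{\max}^{D})\}$, one has $\zeta - \gamma_{A} < 0$, so negative definiteness of $Z(\zeta)$ again reduces to $\phi > 0$. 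Discarding the nonnegative $\zeta^{2}$ and $\gamma_{A}\gamma_{D}$ and using $\gamma_{S} \le \gamma_{R}+\gamma_{\max}^{D}$ and $\gamma_{A} \le \gamma_{\max}^{A}$ I would obtain
\[ \phi(\zeta) \ge \gamma_{R}(1-\zeta) - \zeta(\gamma_{\max}^{A}+\gamma_{\max}^{D}), \]
which, being linear-increasing in $\gamma_{R}$ (since $1-\zeta > 0$), is minimized at $\gamma_{R} = \gamma_{\min}^{R}$; the hypothesis $\zeta(\gamma_{\max}^{A}+\gamma_{\min}^{R}+\gamma_{\max}^{D}) < \gamma_{\min}^{R}$ is exactly what makes this strictly positive.

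The main obstacle is recognizing the algebraic simplification $\gamma_{S}+\gamma_{D}(\gamma_{A}-1) = \gamma_{R}+\gamma_{A}\gamma_{D}$ that drives the clean factorization of $\phi$: without it, the sign-indefinite factor $\gamma_{A}-1$ (which flips according to whether $\gamma_{A} < 1$ or $\gamma_{A} > 1$) would obscure both cases and force a delicate split. A secondary detail is the strict positivity $\gamma_{R} > 0$ coming from the full-row-rank assumption on $R$, which provides the strict inequalities needed to rule out $\lambda=0$ eigenvalues in both branches.
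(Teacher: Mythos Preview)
Your proof is correct and follows essentially the same route as the paper: both compute the Rayleigh quotient of $Z(\zeta)$, invoke Lemma~\ref{Le1} to collapse the $(\zeta I-\overline A)^{-1}$ term, and reduce the question to the positivity of the same quadratic $\phi(\zeta)=h(\zeta)=\zeta^{2}-(\gamma_A+\gamma_S)\zeta+\gamma_R+\gamma_A\gamma_D$. The only cosmetic difference is that the paper explicitly computes and bounds the roots $h_\pm$ of $h$ (bounds that are reused later in Theorem~\ref{Theorem_real}), whereas you bound $\phi(\zeta)$ directly from below by dropping the nonnegative terms $\zeta^{2}$ and $\gamma_A\gamma_D$; both arguments yield the same threshold $\gamma_{\min}^{R}/(\gamma_{\max}^{A}+\gamma_{\min}^{R}+\gamma_{\max}^{D})$.
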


\begin{proof}
Let $w\in\mathbb{R}^m$ be an arbitrary non-zero vector. 
The Rayleigh quotient $q(w,\zeta)$ associated to the matrix $Z(\zeta)$ in \eqref{Eq622} reads:
\begin{equation}\label{RayZ} 
	q(w,\zeta) = \frac{w^T Z w}{w^T w} = (1-\zeta) \frac{w^T R (\zeta I - \overline A)^{-1} R^T w}{w^T w}- 
	\frac{w^T \overline D w}{w^T w} +\zeta.  
\end{equation}
By setting $z=R^Tw$ and recalling that $\gamma_S=\gamma_R+\gamma_D$, we have:
	\begin{eqnarray}\label{RayZ2}
		q(w,\zeta) &=& (1-\zeta) \frac{z^T (\zeta I - \overline A)^{-1} z}{z^T z} \frac{w^T R R^T w}{w^T w} - 
		\frac{w^T \overline D w}{w^T w} + \zeta   \nonumber \\
	&=& (1-\zeta) \frac{z^T (\zeta I - \overline A)^{-1} z}{z^T z} (\gamma_S - \gamma_D)- \gamma_D + \zeta,   
	\end{eqnarray}
	which yields by applying Lemma \ref{Le1}:
	\begin{equation}\label{Eq63}
		q(w,\zeta) =    \frac{1-\zeta}{\zeta - \gamma_A} \gamma_S -\gamma_D \frac{1-\gamma_A}{\zeta - \gamma_A}+ \zeta 
\end{equation}
	Now, $q(w,\zeta) = 0$ from equation \eqref{Eq63} implies
\begin{equation}\label{Eq64}
	h(\zeta) = \zeta^2 - (\gamma_A+\gamma_S) \zeta +\gamma_S + \gamma_D (\gamma_A -1) = 0.
\end{equation}
	The roots $h_-$ and $h_+$ of \eqref{Eq64} are
	\begin{eqnarray} 
		h_- &=& \frac{\gamma_A+\gamma_S - 
		\sqrt{(\gamma_A+\gamma_S)^2 - 4\left(\gamma_R + \gamma_D \gamma_A\right)}}{2} = 
		\frac{2(\gamma_R + \gamma_D \gamma_A)}{\gamma_A+\gamma_S + 
		 \sqrt{(\gamma_A+\gamma_S)^2 - 4\left(\gamma_R + \gamma_D \gamma_A\right)}}, \label{lowerh} \\
		 h_+ &=& \frac{\gamma_A+\gamma_S + 
		\sqrt{(\gamma_A+\gamma_S)^2 - 4\left(\gamma_R + \gamma_D \gamma_A\right)}}{2}. 
		\label{upperh}
	\end{eqnarray} 
Since $\gamma_A+\gamma_S + \sqrt{(\gamma_A+\gamma_S)^2 - 4\left(\gamma_R + \gamma_D \gamma_A\right)} < 2(\gamma_A+\gamma_S)$,
	we have the following bounds for \eqref{lowerh} and \eqref{upperh}:
	\begin{eqnarray}
		h_- &\ge& \frac{\gamma_R + \gamma_D \gamma_A}{\gamma_A+\gamma_S}
                 \ge \frac{\gamma_R}{\gamma_A+\gamma_R + \gamma_D}  \ge \frac{\gamma^R_{\min}}{\gamma^A_{\max} +
		 \gamma^R_{\min} + \gamma^D_{\max}}, \label{lowerh2} \\
		 h_+ &\le& \gamma_A + \gamma_S \le \gamma^A_{\max} + \gamma^S_{\max}. \label{upperh2}
	\end{eqnarray}
        Hence, the Rayleigh quotient associated to $Z(\zeta)$ cannot be zero under the hypotheses on $\zeta$.
\end{proof}

We are now able to bound the real eigenvalues of the preconditioned matrix $\mathcal{A}{\mathcal{P}}^{-1}$. 

\begin{Theorem}
	\label{Theorem_real}
	Let $\lambda$ be a real eigenvalue of $\mathcal{A}\mathcal{P}^{-1}$ with \[\lambda \notin 
	\left[ \min\left\{\gamma_{\min}^A, \dfrac{\gamma_{\min}^R}{\gamma_{\max}^A+\gamma_{\min}^R+ \gamma^D_{\max}}\right\}, \gamma_{\max}^A+\gamma_{\max}^S\right]. \]
	Then, $\lambda$ 
satisfies
	\begin {equation}
		\label{CubicPoly}
	\lambda^3-(\gamma_A+\gamma_{S}+\gamma_{X})\lambda^2{\red +}(\gamma_A \gamma_X + \gamma_K + \gamma_E \gamma_S + \gamma_D \gamma_A + \gamma_R)\lambda 
		{\red -} \left(\gamma_A \gamma_K + \gamma_E \gamma_A \gamma_D + \gamma_E \gamma_R\right) = 0,
\end{equation}
and the following synthetic bound holds:
	\begin {equation}  
		\label{boundsKynot0}
	 \min\left\{\gamma_{\min}^E, \gamma_{\min}^A, \dfrac{\gamma_{\min}^R}{\gamma_{\max}^A+\gamma_{\min}^R+ \gamma^D_{\max}}\right\}
 \le \lambda  \le
\gamma_{\max}^A+\gamma_{\max}^{S}+\gamma_{\max}^{X}.
\end{equation}
\end{Theorem}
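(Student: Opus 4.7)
The plan is to reduce the eigenvalue problem \eqref{Eq31} to a single scalar equation in $\lambda$ by eliminating the blocks $x$ and $z$, and then read off bounds from the resulting cubic via Lemma~\ref{Le2}. Under the hypothesis on $\lambda$, both $\overline{A}-\lambda I$ and $\lambda I-\overline{E}$ are invertible (the second because, by the introductory remark to this subsection, we restrict to eigenvalues outside $I_A\cup I_E$), and the hypothesis on $\lambda$ also puts us in the regime where Lemma~\ref{Le4} guarantees that $y^{*}Z(\lambda)y\ne 0$ whenever $y\ne 0$. The case $y=0$ is impossible here because, combined with \eqref{Eq32} and the requirement $\lambda\notin I_A$, it forces $x=0$, which is one of the particular cases already disposed of before the theorem.

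From \eqref{Eq32} one gets $x=(\lambda-1)(\overline{A}-\lambda I)^{-1}R^{T}y$, and from \eqref{Eq34} one gets $z=(\lambda I-\overline{E})^{-1}Ky$. Substituting both expressions into \eqref{Eq33} and then multiplying on the left by $y^{*}$ and dividing by $\|y\|^{2}$, the resolvent Rayleigh quotients $y^{*}R(\overline{A}-\lambda I)^{-1}R^{T}y/\|y\|^{2}$ and $y^{*}K^{T}(\lambda I-\overline{E})^{-1}Ky/\|y\|^{2}$ can be rewritten, by Lemma~\ref{Le1} applied respectively to $\overline{A}$ with $s=R^{T}y$ and to $\overline{E}$ with $s=Ky$, as $\gamma_{R}/(\gamma_{A}-\lambda)$ and $\gamma_{K}/(\lambda-\gamma_{E})$ for suitable $\gamma_{A}\in I_{A}$, $\gamma_{E}\in I_{E}$. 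The resulting scalar identity
\[
(\lambda-1)\frac{\gamma_{R}}{\gamma_{A}-\lambda}-\gamma_{D}-(\lambda-1)\frac{\gamma_{K}}{\lambda-\gamma_{E}}=-\lambda
\]
is cleared of denominators by multiplying through by $(\gamma_{A}-\lambda)(\lambda-\gamma_{E})$. Using $\gamma_{S}=\gamma_{R}+\gamma_{D}$ and $\gamma_{X}=\gamma_{K}+\gamma_{E}$ to collect terms then yields exactly \eqref{CubicPoly}.

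For the synthetic bound I apply Lemma~\ref{Le2} to $p(\lambda)=\lambda^{3}-a_{2}\lambda^{2}+a_{1}\lambda-a_{0}$, whose coefficients $a_{2}=\gamma_{A}+\gamma_{S}+\gamma_{X}$, $a_{1}=\gamma_{A}\gamma_{X}+\gamma_{K}+\gamma_{E}\gamma_{S}+\gamma_{D}\gamma_{A}+\gamma_{R}$, $a_{0}=\gamma_{A}\gamma_{K}+\gamma_{E}\gamma_{A}\gamma_{D}+\gamma_{E}\gamma_{R}$ are strictly positive (using $\gamma_{A},\gamma_{E},\gamma_{R}>0$). Hence $p(\lambda)<0$ for $\lambda\in(0,\alpha)$ and $p(\lambda)>0$ for $\lambda>\beta$, so every real root lies in $[\alpha,\beta]$. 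The upper part of \eqref{boundsKynot0} follows from $\beta\le\max\{a_{2},a_{0}/a_{1}\}\le\gamma_{\max}^{A}+\gamma_{\max}^{S}+\gamma_{\max}^{X}$: the bound on $a_{2}$ is immediate from the indicator intervals, and $a_{0}/a_{1}\le a_{2}$ is shown by checking, term by term against the numerator, that each summand of $a_{0}$ is dominated by $a_{1}\cdot(\gamma_{\max}^{A}+\gamma_{\max}^{S}+\gamma_{\max}^{X})$. For the lower part one estimates $a_{0}/a_{1}$ from below by discarding appropriate positive contributions so that a surviving term of the form $\gamma_{\min}^{R}/(\gamma_{\max}^{A}+\gamma_{\min}^{R}+\gamma_{\max}^{D})$ or $\gamma_{\min}^{E}$ dominates, and compares with $a_{2}\ge\gamma_{\min}^{A}$ to close the minimum.

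The main obstacle is this last book-keeping step: the ratio $a_{0}/a_{1}$ has mixed products of six indicators, and obtaining the clean three-term minimum in \eqref{boundsKynot0} requires choosing, for each possible distribution of smallness among $\gamma_{D},\gamma_{E},\gamma_{R}$, the right term of the numerator to pair with an upper bound on the denominator. The algebraic derivation of the cubic is routine once Lemma~\ref{Le1} is applied twice, and the exclusion interval in the hypothesis is tailor-made so that Lemma~\ref{Le4} removes the pathological case where the scalarization step becomes degenerate; the difficulty is concentrated in the sharpness of the coefficient estimates.
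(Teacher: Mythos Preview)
Your route—eliminating $x$ and $z$ to reach a scalar equation in $y$, applying Lemma~\ref{Le1} twice—is genuinely different from the paper's, which eliminates $x$ and then $y$ (via $Z(\lambda)^{-1}$) to land on an equation in $z$. As a consequence your invocation of Lemma~\ref{Le4} is misplaced: in your scheme you never invert $Z(\lambda)$; what you actually need is $\lambda\notin I_E$, so that $(\lambda I-\overline E)^{-1}$ exists and Lemma~\ref{Le1} applies to $\overline E$. That extra assumption is not contained in the theorem's stated hypothesis, and while the section preamble restricts attention to $\lambda\notin I_A\cup I_E$ for the purpose of the bound, the cubic \eqref{CubicPoly} is asserted under the stated hypothesis alone—something the paper's elimination order genuinely delivers, since it only needs $Z(\lambda)$ definite, which is exactly what Lemma~\ref{Le4} and the hypothesis supply.

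There is a more concrete gap in the bound itself. In your derivation $\gamma_R,\gamma_D$ (hence $\gamma_S=\gamma_R+\gamma_D$) are Rayleigh quotients at the same vector $y$, so $\gamma_S\in I_S$ as required. But your ``$\gamma_K$'' is $q(K^TK,y)$, while your ``$\gamma_E$'' comes from Lemma~\ref{Le1} applied to $s=Ky$ and is therefore $q(\overline E,w)$ for a \emph{different} vector $w$. The shorthand $\gamma_X:=\gamma_K+\gamma_E$ is then not a Rayleigh quotient of $\overline X$ and need not lie in $I_X$; the best you can say is $\gamma_X\le\gamma_{\max}^K+\gamma_{\max}^E$, which may strictly exceed $\gamma_{\max}^X$. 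Hence Lemma~\ref{Le2} only yields the weaker upper bound $\gamma_{\max}^A+\gamma_{\max}^S+\gamma_{\max}^K+\gamma_{\max}^E$, not \eqref{boundsKynot0}. The paper avoids this precisely because in its elimination both $\gamma_K=q(KK^T,z)$ and $\gamma_E=q(\overline E,z)$ are evaluated at the same $z$, so their sum is genuinely $q(\overline X,z)\in I_X$. For the lower bound, the paper does not estimate $a_0/a_1$ at all: it rewrites the cubic as $q(\lambda)=\gamma_K(\lambda-1)(\lambda-\gamma_A)+(\gamma_E-\lambda)h(\lambda)$, with $h$ the quadratic \eqref{Eq64} from Lemma~\ref{Le4}, and checks directly that each factor has the right sign on $\bigl[0,\min\{\gamma_{\min}^E,\gamma_{\min}^A,\gamma_{\min}^R/(\gamma_{\max}^A+\gamma_{\min}^R+\gamma_{\max}^D)\}\bigr]$. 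This is where the bound \eqref{lowerh2} on $h_-$ actually enters, and it produces the clean three-term minimum without the coefficient bookkeeping you identify as the main obstacle.
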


\begin{proof}
	The equation  \eqref{Eq32} can be written as
\begin{equation}\label{Eq65}
x = (1-\lambda) (\lambda I -\overline{A})^{-1} R^T y.
\end{equation}
	Introducing $x$ of \eqref{Eq65} into \eqref{Eq33}, we have
\begin{equation}\label{Eq66}
Z(\lambda)y = (\lambda-1) K^T z,
\end{equation}
where $Z(\lambda)=(1-\lambda) R(\lambda I -\overline{A})^{-1} R^T - \overline D  + \lambda I$.
        The hypotheses on $\lambda$ allow us to use Lemma \ref{Le4}, which guarantees the definiteness of the matrix function  $Z(\lambda)$. 
		Hence, obtaining
	$y = (\lambda - 1)Z(\lambda)^{-1} K^T z$ from \eqref{Eq66} and substituting it in \eqref{Eq34} yields
\begin{equation}\label{Eq67}
\left[K(\lambda-1)Z(\lambda)^{-1} K ^T + \overline E -\lambda I \right]z = 0.
\end{equation}
Premultiplying by $z^T$on the left and dividing by $z^Tz$ yields
\begin{equation}\label{Eq68}
\frac{z^TK (\lambda-1) Z(\lambda)^{-1} K ^T z}{z^Tz}+ \gamma_E -\lambda=0.
\end{equation}
By setting $w=K^Tz$, we have
\begin{equation}\label{Eq68_2}
	(\lambda-1) \frac{w^TZ(\lambda)^{-1}w}{w^Tw} \frac{z^T K K^T z}{z^T z}+ \gamma_E -\lambda =0,
\end{equation}
	which becomes:
\begin{equation}\label{Eq68_1}
	(\lambda-1) \frac{u^T u}{u^T Z(\lambda) u} \frac{z^T K K^T z}{z^T z}+ \gamma_E -\lambda =0,
\end{equation}
by setting $u = Z(\lambda)^{-1/2}w$.
	Using now \eqref{Eq63} in Lemma \ref{Le4}, we get
\begin{equation}
  \dfrac{\lambda-1}{\dfrac{1-\lambda}{\lambda - \gamma_A} \gamma_S -\gamma_D \dfrac{1-\gamma_A}{\lambda - \gamma_A}+ \lambda} \gamma_K + \gamma_E - \lambda = 0,
\end{equation}
and after simple algebra, we are left with:
	\begin{equation}
		\left[\lambda^2 - (1 + \gamma_A)\lambda + \gamma_A \right] \gamma_K + (\gamma_E - \lambda) \left[
		\lambda^2 - (\gamma_A+\gamma_S) \lambda +\gamma_S + \gamma_D (\gamma_A -1)\right] = 0,
	\label{eq:intermediate}
	\end{equation}
 which can be rewritten as:
	\begin{equation}
		q(\lambda) = s(\lambda) \gamma_K + 
		(\gamma_E - \lambda) h(\lambda)  = 0,
		\label{before}
	\end{equation}
	where $h(\lambda)$ is the same polynomial as in \eqref{Eq64}.
%
	We now observe that $q(0) =\gamma_A \gamma_K + \gamma_E(\gamma_R + \gamma_D \gamma_A) > 0$. Then,
	denoting by:
	\begin{equation*}
		\lambda_{-}^{UB} = \frac{\gamma^R_{\min}}{\gamma^A_{\max}+\gamma^R_{\min}+\gamma^D_{\max}} < 1,
	\end{equation*}
	we notice that $s(\lambda)$, $h(\lambda)$ and $(\gamma_E-\lambda)$ are all positive for any $\lambda\in
	[0,\min\{\gamma^E_{\min},\gamma^A_{\min},\lambda_{-}^{UB}\}]$, thus proving the left inequality in \eqref{boundsKynot0}.

	Equation (\ref{before}) can be more explicitly rewritten as
\begin{equation}\label{Eq61}
	\lambda^3-a_2 \lambda^2+a_1\lambda-a_0=0, 
\end{equation}
where
	\begin{eqnarray}
		\label{coefficients}
		a_2 & = & \gamma_X + \gamma_S + \gamma_A \\
		a_1 & = & \gamma_A \gamma_X + \gamma_K + \gamma_E \gamma_S + \gamma_D \gamma_A + \gamma_R\\
		a_0 & = & \gamma_A \gamma_K + \gamma_E \gamma_A \gamma_D + \gamma_E \gamma_R.
	\end{eqnarray}
	Applying Lemma \ref{Le2} to this cubic polynomial we have
	\[ \alpha = \frac{\gamma_A \gamma_K + \gamma_E \gamma_A \gamma_D + \gamma_E \gamma_R}{\gamma_A \gamma_X + \gamma_K + \gamma_E \gamma_S + \gamma_D \gamma_A + \gamma_R}, \ 
	\beta = \gamma_A+\gamma_{S}+\gamma_{X} .\]
	In this case, it is easy to verify that $\alpha < \beta$, from which we have that
	\[ \lambda \le \gamma_{\max}^A+\gamma_{\max}^{S}+\gamma_{\max}^{X}, \]
	which is the right inequality of \eqref{boundsKynot0}.
\end{proof}

\section{Eigenvalue bounds for the block diagonal preconditioner}
\label{sec:eig_diag}
For the sake of comparison,
we consider now as a preconditioner for the double saddle-point linear system \eqref{Eq1} the block diagonal SPD matrix:
\[
        {\mathcal{P}_D}=\begin{bmatrix} \widehat A &0&0\\0&\widehat S &0\\0&0& \widehat X\end{bmatrix},
		\]
		already introduced in \eqref{Eq30}. The use of $\mathcal{P}_D$ as a preconditioner instead of $\mathcal{P}$
		allows for employing MINRES instead of GMRES.
		The eigenvalues of the preconditioned matrix $\mathcal{A}\mathcal{P}_D^{-1}$ are the same as those of
		$\mathcal{P}_D^{-1/2} \mathcal{A}\mathcal{P}_D^{-1/2}$. Recalling equation \eqref{Eq31}, the corresponding
		eigenproblem reads:
\begin{align}
&\overline{A}x+R^Ty=\lambda x, \label{EqD32.11}\\
&Rx-\overline D y + K^Tz= \lambda y, \label{EqD32.12}\\
&Ky + \overline E z =\lambda z. \label{EqD32.13}
\end{align}
	Assuming $\lambda \not \in [\gamma_A^{\min}, \gamma_A^{\max}] $,
	equation \eqref{EqD32.11} can be written as
\begin{equation}\label{EqD65}
x = (\lambda I - \overline A)^{-1} R^T y,
\end{equation}
	which replaced into \eqref{EqD32.12} yields
\begin{equation}
	\left( - R (\lambda I - \overline A)^{-1} R^\top  + \overline D +\lambda I \right) y = K^T z.
	\label{WyKz}
\end{equation}
Setting $W(\lambda)=-R(\lambda I-\overline A)^{-1}R^T+\overline D+\lambda I$, 
we now investigate for which values of $\lambda$ yield 0 is not included in the interval $[\lambda_{\min}(W(\lambda)),\lambda_{\max}(W(\lambda))]$. 
	Applying the lemma \ref{Le1}, 
	the Rayleigh quotient $q(W(\lambda),s)$ reads:
	\begin{equation}
		\label{yZyDIAG}
		q(W(\lambda),s) =
		\frac{s^T W(\lambda) s}{s^T s} =
		-\frac{s^T R (\lambda I-\overline A)^{-1} R^T s}{s^T s} + \gamma_D + \lambda =
		-\frac{\gamma_R}{\lambda-\gamma_A} + \gamma_D + \lambda =
	\frac{p(\lambda)}{\lambda - \gamma_A},
	\end{equation}
	with
	\[p(\lambda) = \lambda^2 - (\gamma_A - \gamma_D) \lambda- \gamma_R -\gamma_A \gamma_D.\]
	Denote as $\lambda_-, \lambda_+$ the two roots of $p$:
	\begin{eqnarray}
		\lambda_+ & = & \frac{\gamma_A-\gamma_D+\sqrt{(\gamma_A+\gamma_D)^2+4\gamma_R}}{2}, \nonumber \\
		\lambda_- & = & \frac{\gamma_A-\gamma_D-\sqrt{(\gamma_A+\gamma_D)^2+4\gamma_R}}{2} 
	\end{eqnarray}
	Since $p(\gamma_A + \gamma_R) = \gamma_R(\gamma_R + \gamma_A + \gamma_D) > 0$ and $p(\gamma_A) = -\gamma_R  < 0$,
	we have that $\gamma_A<\lambda_+<\gamma_A+\gamma_R$. 
	\textcolor{red}{Moreover, as $\lambda_+$ is increasing in $\gamma_A, \gamma_R$ and decreasing in $\gamma_D$, we have the following,
	possibly tighter, bound:
	\[ \lambda_+ \le \frac{\gamma_{\max}^A+\sqrt{(\gamma_{\max}^A)^2+4\gamma_{\max}^R}}{2} \le \gamma_{\max}^A + \sqrt{\gamma_{\max}^R}.\]
	Hence, if $\lambda>0$, $p(\lambda)$ can be zero only if
	$\lambda\in[\gamma^A_{\min},\gamma^A_{\max}+\sqrt{\gamma^R_{\max}}]$. 
	The negative root, $\lambda_-$, is increasing in $\gamma_A$ and
        decereasing in $\gamma_D, \gamma_R$ therefore,
        \[ \lambda_-(\gamma_A, \gamma_R, \gamma_D)  \ge \lambda_-(0, \gamma_R, \gamma_D) = \frac{-\gamma_D-\sqrt{\gamma_D^2+4\gamma_R}}{2}
        \ge  -\gamma_D - \sqrt{\gamma_R}
	\ge  -\gamma_{\max}^D - \sqrt{\gamma_{\max}^R}. \]
        To bound $\lambda_-$ from above, we bound its modulus from below:
        \[ |\lambda_-|  =  \frac{\gamma_R +\gamma_A \gamma_D}{\lambda_+}  > \frac{\gamma_R +\gamma_A \gamma_D}{\gamma_A + \gamma_R}
                = 1 + \frac{\gamma_A(\gamma_D-1)}{\gamma_A + \gamma_R}.
                 \]
		 If $\gamma_D \ge 1$ then $ |\lambda_-|  \ge 1$, otherwise, the last expression can be conveniently rewritten as  
		 \[ \frac{\gamma_R +\gamma_A \gamma_D}{\gamma_A + \gamma_R} = 
		  \frac{\gamma_S +(\gamma_A-1) \gamma_D}{\gamma_A + \gamma_S - \gamma_D},\] 
		  which, upon the assumption $\gamma_D < 1$,  can be shown to be decreasing in $\gamma_A$, so that
		  \begin{eqnarray*} \frac{\gamma_S +(\gamma_A-1) \gamma_D}{\gamma_A + \gamma_S - \gamma_D} &\ge&
			  \frac{\gamma_S +(\gamma_{\max}^A-1) \gamma_D}{\gamma_{\max}^A + \gamma_S - \gamma_D} \ge \ \text{(being
			  the previous increasing in both $\gamma_D$ and $\gamma_S$ \ since $\gamma_{\max}^A > 1$)}, \\
			  & \ge & \frac{\gamma_{\min}^S +(\gamma_{\max}^A-1) \gamma_{\min}^D}{\gamma_{\max}^A + \gamma_{\min}^S - \gamma_{\min}^D}\ge
		  \frac{\gamma_{\min}^S }{\gamma_{\max}^A + \gamma_{\min}^S}.\end{eqnarray*} 
		  Summarizing, as this last quantity is less than one,
	 $\lambda_- \le -\dfrac{\gamma_{\min}^S}{\gamma_{\max}^A + \gamma_{\min}^S}$.
	Therefore, if $\lambda$ is not included in the set:
	\[ I_2 = \left[-\gamma_{\max}^D - \sqrt{\gamma_{\max}^R}, -\dfrac{\gamma_{\min}^S}{\gamma_{\max}^A + \gamma_{\min}^S}\right ] \cup \left[\gamma_{\min}^A , \gamma_{\max}^A+ \sqrt{\gamma_{\max}^R}\right], \]
	} then
$0 \not \in [\lambda_{\min}(W(\lambda)),\lambda_{\max}(W(\lambda))]$ and the eigenvalues of $W(\lambda)$ are either all
positive or all negative. 
Under this condition, we can then proceed as in the 
proof of Theorem \ref{Theorem_real}. Introducing $y=W(\lambda)^{-1}K^Tz$ from \eqref{WyKz} into \eqref{EqD32.13} we 
obtain:
\begin{equation*}
	K W(\lambda)^{-1} K^T z + \overline E z - \lambda z = 0.
\end{equation*}
By pre-multiplying both sides by $z^T$ and dividing by $z^Tz$, and recalling the expression of the Rayleigh quotient
for $W(\lambda)$ in \eqref{yZyDIAG}, we have:
\begin{equation*}
	\frac{\lambda - \gamma_A}{p(\lambda)} \gamma_K + \gamma_E - \lambda = 0,
\end{equation*}
from which we can conclude that the eigenvalues $\lambda$ of $\mathcal{A}\mathcal{P}_D^{-1}$ satisfy the third-degree polynomial:
\begin{equation} 
	\pi(\lambda) = 
	\lambda^3 - \underbrace{(\gamma_A + \gamma_E - \gamma_D}_{\text{\normalsize $\widehat a_2$}}) \lambda^2 - 
\underbrace{(\gamma_R + \gamma_K + \gamma_E \gamma_D + \gamma_A \gamma_D - \gamma_E \gamma_A)}_{\text{\normalsize $\widehat a_1$}} \lambda +
	\underbrace{\gamma_A \gamma_K  + \gamma_E \gamma_R + \gamma_E \gamma_A \gamma_D}_{\text{\normalsize $\widehat a_0$}}.
	\label{pilambda}
\end{equation}
There are one negative ($\mu_a$) and two positive ($\mu_b<\mu_c$) roots  of $\pi(\lambda)$, as also proved in \cite[Corollary 2.1]{Bradley}.
The next theorem provides bounds on the eigenvalues of the preconditioned matrix $\mathcal{A}{\mathcal{P}_D}^{-1}$.
\begin{Theorem}
        \label{Theorem_real_D}
        The eigenvalues of the preconditioned matrix $\mathcal{A}{\mathcal{P}_D}^{-1}$ belong to $I_- \cup I_+$, where
        \begin {eqnarray*}\
	I_- & = & \left[-\gamma_{\max}^D - \sqrt {\gamma_{\max}^R+ \gamma_{\max}^K}, - \textcolor{red}{\frac{\gamma_{\min}^S}{\gamma_{\max}^A + \gamma_{\min}^S}} \right] \\
        I_+ &=& \left[\min\{\gamma_{\min}^E, \gamma_{\min}^A \},
        \max\left\{\gamma_{\max}^A + \gamma_{\max}^E + \sqrt{\gamma_{\max}^K + \gamma_{\max}^R},
        \sqrt{\gamma_{\max}^R + \gamma_{\max}^K + \gamma_{\max}^D (\gamma_{\max}^E + \gamma_{\max}^A)}\right \} \right]
        \end {eqnarray*}
\end{Theorem}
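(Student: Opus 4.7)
The preconditioned matrix $\mathcal{A}\mathcal{P}_D^{-1}$ is similar to the symmetric matrix $\mathcal{M} = \mathcal{P}_D^{-1/2}\mathcal{A}\mathcal{P}_D^{-1/2}$ coming from \eqref{Eq31}, so every eigenvalue is real. My plan combines a Weyl-type perturbation argument, which yields the \emph{outer} endpoints of $I_-$ and $I_+$, with a sign analysis of the cubic $\pi(\lambda)$ from \eqref{pilambda}, which yields the \emph{inner} ones.

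For the outer bounds I split $\mathcal{M} = \mathcal{M}_D + \mathcal{M}_O$, with $\mathcal{M}_D$ the block-diagonal matrix having diagonal blocks $\overline A,\, -\overline D,\, \overline E$ and $\mathcal{M}_O$ collecting the four off-diagonal blocks $R, R^T, K, K^T$. A direct computation of $\mathcal{M}_O^2$ shows that its nonzero eigenvalues coincide with those of $R R^T + K^T K$, hence $\|\mathcal{M}_O\|_2 \le \sqrt{\gamma^R_{\max} + \gamma^K_{\max}}$. Since the spectrum of $\mathcal{M}_D$ lies in $[-\gamma^D_{\max},-\gamma^D_{\min}] \cup [\gamma^A_{\min},\gamma^A_{\max}] \cup [\gamma^E_{\min},\gamma^E_{\max}]$, Weyl's perturbation inequality immediately delivers the left endpoint of $I_-$ and the first argument of the $\max$ defining the right endpoint of $I_+$.

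For the inner bounds I work with the two convenient factorizations
\[
\pi(\lambda) = (\lambda - \gamma_A)(\lambda - \gamma_E)(\lambda + \gamma_D) - \gamma_R(\lambda - \gamma_E) - \gamma_K(\lambda - \gamma_A) = (\lambda - \gamma_E)\,p(\lambda) - \gamma_K(\lambda - \gamma_A),
\]
where $p$ is the quadratic introduced just before the theorem. The lower bound of $I_+$ follows from the first identity: for $0 < \lambda < \min(\gamma_A,\gamma_E)$ the three signed factors in the cubic term multiply to a positive quantity and the two subtracted terms are negative multiples of negatives, so $\pi(\lambda) > 0$ throughout the interval; no positive root can therefore sit below $\min(\gamma^A_{\min},\gamma^E_{\min})$. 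The upper bound of $I_-$ is obtained by evaluating the second identity at $\lambda = \lambda_-$: since $p(\lambda_-)=0$ and $\lambda_- - \gamma_A < 0$, we get $\pi(\lambda_-) = \gamma_K(\gamma_A - \lambda_-) \ge 0$, and because $\pi(\lambda) \to -\infty$ as $\lambda \to -\infty$ the unique negative root satisfies $\mu_a \le \lambda_-$; the inequality $|\lambda_-| \ge \gamma^S_{\min}/\gamma^A_{\max}$ already derived in the text then closes the argument.

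The remaining term $\sqrt{\gamma^R_{\max}+\gamma^K_{\max}+\gamma^D_{\max}(\gamma^E_{\max}+\gamma^A_{\max})}$ in the $\max$ of $I_+$ is a Cauchy-type bound on the positive roots of $\pi$: one checks that $\pi(\lambda) > 0$ whenever $\lambda^2 \ge \widehat a_1$, and bounding $\widehat a_1$ by discarding its negative piece $-\gamma_E \gamma_A$ and replacing each remaining factor by its $\max$ indicator yields the stated threshold. The main technical obstacle I anticipate is the careful book-keeping of regimes: one must check that, taken together, the Weyl estimate and the polynomial-based estimate cover all orderings of $\gamma_A,\gamma_E,\gamma_D$ and all eigenvalues, including those falling inside the exceptional set $I_2$ where the reduction to $\pi$ breaks because $W(\lambda)$ is singular; in that step the standing assumption $1 \in I_A$ and the pointwise identity $\gamma_S = \gamma_R + \gamma_D$ become critical to line up the endpoints of $I_2$ with those of $I_- \cup I_+$.
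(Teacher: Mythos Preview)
Your overall strategy is sound and, for the outer endpoints, genuinely different from (and cleaner than) the paper's.  The paper proves all four endpoints by sign analysis of the cubic $\pi$: (i) for the left of $I_+$ it evaluates $\pi(\gamma_A)\pi(\gamma_E)<0$ to trap $\mu_b$ between $\gamma_A$ and $\gamma_E$; (ii) for the right of $I_+$ it writes $\pi=\lambda\varphi(\lambda)+\widehat a_0$ and bounds the largest root of $\varphi$ in the two regimes $\gamma_D\lessgtr\gamma_A+\gamma_E$, which is precisely where the two arguments of the $\max$ come from; (iii) for the right of $I_-$ it uses exactly your observation $\pi(\lambda_-)\ge0$; (iv) for the left of $I_-$ it decomposes $\pi=\gamma_A q_A+\gamma_E q_E+r$ and reads off the negative root of $r$.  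Your Weyl splitting $\mathcal M=\mathcal M_D+\mathcal M_O$ with $\|\mathcal M_O\|_2^2=\lambda_{\max}(RR^T+K^TK)\le\gamma^R_{\max}+\gamma^K_{\max}$ replaces (ii) and (iv) at a stroke: it gives the left endpoint of $I_-$ verbatim, and for the right of $I_+$ it yields the tighter bound $\max\{\gamma^A_{\max},\gamma^E_{\max}\}+\sqrt{\gamma^R_{\max}+\gamma^K_{\max}}$, which is already dominated by the \emph{first} argument of the $\max$.  Hence your separate ``Cauchy-type'' justification for the second $\max$-term is superfluous --- which is just as well, because as written it is not correct: the assertion ``$\pi(\lambda)>0$ whenever $\lambda^2\ge\widehat a_1$'' holds only when $\widehat a_2\le0$, i.e.\ $\gamma_D\ge\gamma_A+\gamma_E$ (this is exactly the paper's case split in (ii)).  With Weyl you can simply drop that step.

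For the inner endpoints your arguments coincide with the paper's items (i) and (iii); your factorisation $\pi(\lambda)=(\lambda-\gamma_A)(\lambda-\gamma_E)(\lambda+\gamma_D)-\gamma_R(\lambda-\gamma_E)-\gamma_K(\lambda-\gamma_A)$ is a tidy way to see $\pi>0$ on $(0,\min\{\gamma_A,\gamma_E\})$.  The obstacle you flag about the exceptional set $I_2$ is real but harmless: the positive part of $I_2$ begins at $\gamma^A_{\min}$ and its negative part ends at $-\gamma^S_{\min}/\gamma^A_{\max}$, so $I_2$ (and $I_A$) never meet the gap $\bigl(-\gamma^S_{\min}/\gamma^A_{\max},\min\{\gamma^A_{\min},\gamma^E_{\min}\}\bigr)$.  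Thus any eigenvalue in $I_A\cup I_2$ already lies in $I_-\cup I_+$, and the cubic $\pi$ is only invoked off that set, where its derivation is valid.
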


\begin{proof}
	The proof is subdivided into 4 sections, corresponding to the limits for $I_+$ and $I_-$, respectively. 
\begin{enumerate}
\item Lower bound for $\mu_b$.
	Notice that $\pi(\gamma_E)  =  \gamma_K (\gamma_A - \gamma_E)$, and $\pi(\gamma_A)  =  -(\gamma_R + \gamma_A \gamma_D) (\gamma_A - \gamma_E)$. Hence, $\pi(\gamma_A) \pi(\gamma_E) < 0$, which means that $\mu_b\in[\gamma_A,\gamma_E]$ or $\mu_b\in[\gamma_E,\gamma_A]$ depending whether $\gamma_A$ is larger or smaller than $\gamma_E$. It follows that
		\[ 0 < \min\{\gamma_{\min}^E, \gamma_{\min}^A \} \le \mu_b.\]
\item Upper bound for $\mu_c$.
	The polynomial $\pi(\lambda)$ can be written also as:
\[ \pi(\lambda) = \lambda \varphi(\lambda) + \widehat a_0,\]
where
\[ \varphi(\lambda) =  \lambda^2 - \left(\gamma_A + \gamma_E - \gamma_D\right) \lambda - \left(\gamma_R + \gamma_K + \gamma_E \gamma_D + \gamma_A \gamma_D - \gamma_E \gamma_A\right). \]
		Since $\widehat{a}_0>0$, an upper bound for the largest root of $\varphi$ is also a bound for $\mu_c$.
Direct computation shows that
\begin{eqnarray*} \varphi\left(\gamma_A + \gamma_E + \sqrt{\gamma_K + \gamma_R} \right) &= &
        (\gamma_A + \gamma_E) ^2 + 2(\gamma_A + \gamma_E)  \sqrt{\gamma_K + \gamma_R} + \gamma_K + \gamma_R\\
        &-& (\gamma_A + \gamma_E)^2 - \gamma_D (\gamma_A + \gamma_E) - (\gamma_A + \gamma_E + \gamma_D) \sqrt{\gamma_K + \gamma_R}\\
        &-& (\gamma_R + \gamma_K + \gamma_E \gamma_D + \gamma_A \gamma_D - \gamma_E \gamma_A). \\
        &=& (\gamma_E + \gamma_A - \gamma_D) \sqrt{\gamma_K + \gamma_R}  + \gamma_E \gamma_A,
        \end{eqnarray*}
        which is positive for $\gamma_D < \gamma_E + \gamma_A$.  If $\gamma_D \ge \gamma_E + \gamma_A$, positivity
        of $\varphi$ is guaranteed under the condition:
        \[ \lambda > \sqrt{\gamma_R + \gamma_K + \gamma_D (\gamma_E + \gamma_A)}. \]
        Summarizing, we have \[\mu_c \le \max\left\{\gamma_{\max}^A + \gamma_{\max}^E + \sqrt{\gamma_{\max}^K + \gamma_{\max}^R},
        \sqrt{\gamma_{\max}^R + \gamma_{\max}^K + \gamma_{\max}^D (\gamma_{\max}^E + \gamma_{\max}^A)}\right \}. \]
\item Upper bound for $\mu_a$.
        Since $\pi(\lambda_-) >  0$,  we immediately get an upper bound for $\mu_a$ given by
			\[ \mu_a \le h_- \le \textcolor{red}{- \frac{\gamma_{\min}^S}{\gamma_{\max}^A + \gamma_{\min}^S}}. \]
\item Lower bound for $\mu_a$.
        {
		Let us write $\pi(\lambda)$ as
                \[ \pi(\lambda) = \gamma_A q_A(\lambda) + \gamma_E q_E(\lambda) + r(\lambda), \]
                where
                \begin{eqnarray}
                        q_A(\lambda) &=&  \gamma_K - (\lambda - \gamma_E)(\lambda + \gamma_D) \label{qA}, \\
			q_E(\lambda) &=&  -p(\lambda), \label{qE} \\
			r(\lambda) &=& \lambda(\lambda^2  + \gamma _D \lambda  - \gamma_R - \gamma_K) \label{rl} .
                \end{eqnarray}
                Since $\mu_A$ is a root of $\pi$, the following relationship must hold:
		\begin{equation}
			\label{ma-gE}
                \mu_a - \gamma_E = \frac{\gamma_K(\mu_A - \gamma_A)}{p(\mu_a)}.
		\end{equation}
		Substituting \eqref{ma-gE} into \eqref{qA} and rearranging yields
\[ q_A(\mu_a)  = -\frac{\gamma_R \gamma_K}{p(\mu_A)}. \]
		Equation \eqref{ma-gE} also tells us that $p(\mu_A) > 0$, because $\mu_a$ is negative. As a consequence,
both $q_A(\mu_A)$ and $q_E(\mu_A)$ are negative, hence it must be $r(\mu_A) >  \pi(\mu_A) = 0$ and therefore
the negative root of $r(\lambda)$ is a lower bound for $\mu_a$. 
The negative root of $r(\lambda)$ is
\begin{eqnarray*} s_- &=& \frac{-\gamma_D - \sqrt{\gamma_D^2 + 4 (\gamma_R+ \gamma_K)}}{2}
        \ge -\gamma_D - \sqrt {\gamma_R+ \gamma_K}
	        \ge -\gamma_{\max}^D - \sqrt {\gamma_{\max}^R+ \gamma_{\max}^K}.
        \end{eqnarray*}
                }
        \end{enumerate}
\end{proof}

\section{Mixed form of Biot's poroelasticity equations}
\label{SecBiot}
The classical formulation of linear poroelasticity couples Darcy's flow of a single-phase fluid with the quasi-static
mechanical deformation of an elastic and fully saturated porous medium.
In their mixed form, the governing equations consist of a conservation law of linear momentum, a conservation law of 
fluid mass, and Darcy's law connecting the fluid velocity to the variation of the hydraulic head.

Let $\Omega \subset \mathbb{R}^d$ ($d=2,3$) be the domain occupied by the porous medium with $\Gamma$ its 
Lipschitz-continuous boundary, which
is decomposed as $\Gamma = \overline{\Gamma_{u} \cup \Gamma_{\sigma}} = \overline{\Gamma_p \cup \Gamma_{q}}$, with 
$\Gamma_u \cap \Gamma_{\sigma} = \Gamma_p \cap \Gamma_{q} = \emptyset$.
The strong form of the initial boundary value problem (IBVP) consists of finding the displacement 
$\tensorOne{u}:\overline{\Omega}\times]0,t_{\max}[ \rightarrow \mathbb{R}^d$, Darcy's velocity 
$\tensorOne{q}:\overline{\Omega}\times]0,t_{\max}[ \rightarrow \mathbb{R}^d$, and the excess pore pressure 
$p:\overline{\Omega}\times]0,t_{\max}[ \rightarrow \mathbb{R}$ such that:
\begin{subequations}
\begin{align}
  \nabla \cdot \left( \tensorFour{C}_{dr} : \nabla^{s} \tensorOne{u} - b p \tensorTwo{1} \right)
  &=
  \tensorOne{0} & &\mbox{ in } \Omega \times ]0,t_{\max}[
  & &\mbox{(equilibrium)}, \label{momentumBalanceS}\\
  \mu \tensorTwo{\kappa}^{-1} \cdot \tensorOne{q} + \nabla p
  &= 
  \tensorOne{0} & &\mbox{ in } \Omega \times ]0,t_{\max}[
  & &\mbox{(Darcy's law)}, \label{darcyS}  \\	
  b \nabla \cdot \dot{\tensorOne{u}} + S_{\epsilon} \dot{p} + \nabla \cdot \tensorOne{q} &=
  f & &\mbox{ in } \Omega \times ]0,t_{\max}[
  & &\mbox{(continuity)}, \label{massBalanceS}
\end{align}\label{eq:IBVP}\null
\end{subequations}
where $\tensorFour{C}_{dr}$ is the rank-four elasticity tensor, $\nabla^{s}$ is the symmetric gradient operator, 
$b$ is the Biot coefficient, and $\tensorTwo{1}$ is the rank-two identity tensor; $\mu$ and $\tensorTwo{\kappa}$ are 
the fluid viscosity and the rank-two permeability tensor, respectively; $S_{\epsilon}$ is the constrained specific 
storage coefficient, i.e., the reciprocal of Biot's modulus, and $f$ the fluid source term. 
The following set of boundary and initial conditions completes the formulation:
\begin{subequations}
\begin{align}
  \tensorOne{u} &= \bar{\tensorOne{u}}
  & &\mbox{ on } \Gamma_u \times ]0,t_{\max}[, & \label{momentumBalanceS_DIR}\\
  \left( \tensorFour{C}_{dr} : \nabla^{s} \tensorOne{u} - b p \tensorTwo{1} \right) \cdot \tensorOne{n} &= 
  \bar{\tensorOne{t}}
  & &\mbox{ on } \Gamma_{\sigma} \times ]0,t_{\max}[, & \label{momentumBalanceS_NEU}\\
  \tensorOne{q} \cdot \tensorOne{n} &= \bar{q}
  & &\mbox{ on } \Gamma_{q} \times ]0,t_{\max}[, &\label{massBalanceS_NEU}\\    
  p &=\bar{p}
  & &\mbox{ on } \Gamma_p \times ]0,t_{\max}[, & \label{massBalanceS_DIR}\\
	\tensorOne{u}(\tensorOne{x}, 0) &=\tensorOne{u}_0
  & &\mbox{ } \tensorOne{x} \in \overline{\Omega}, &  \label{momentumBalanceS_IC}\\
  p(\tensorOne{x}, 0) &=p_0
  & &\mbox{ } \tensorOne{x} \in \overline{\Omega}, &  \label{massBalanceS_IC}
\end{align}\label{eq:IBVP_b}\null
\end{subequations}
where $\bar{\tensorOne{u}}$, $\bar{\tensorOne{t}}$, $\bar{q}$, and $\bar{p}$ are the prescribed boundary displacements, 
tractions, Darcy velocity, and excess pore pressure, respectively, whereas $p_0$ is the initial excess pore pressure.
By introducing the spaces:
\begin{subequations}
\begin{align}
	\boldsymbol{\mathcal{U}} &= \{ \tensorOne{u} \in [H^1(\Omega)]^d \ \ | \ \
  \tensorOne{u}|_{\Gamma_u}=\bar{\tensorOne{u}} \}, &
	\boldsymbol{\mathcal{U}}_0 &= \{ \tensorOne{u} \in [H^1(\Omega)]^d \ \ | \ \
  \tensorOne{u}|_{\Gamma_u}=\tensorOne{0} \}, \label{eq:spaceH1_0}  \\
	\boldsymbol{\mathcal{Q}} &= \{\tensorOne{q} \in [H(\text{div};\Omega)]^d \ \ | \ \
  \tensorOne{q} \cdot \tensorOne{n}|_{\Gamma_q}=\bar{q} \}, &
	\boldsymbol{\mathcal{Q}}_0 &= \{\tensorOne{q} \in [H(\text{div};\Omega)]^d \ \ | \ \
  \tensorOne{q} \cdot \tensorOne{n}|_{\Gamma_q}=0 \}, \label{eq:spaceHdiv0}
\end{align}\label{eq:functionSpaces}
\end{subequations}
the weak form of the IBVP \eqref{eq:IBVP} reads: find $\{ \tensorOne{u}(t),\tensorOne{q}(t), p(t) \} \in 
\boldsymbol{\mathcal{U}} \times \boldsymbol{\mathcal{Q}} \times L^2(\Omega)$ such that $\forall t \in ]0,t_{\max}[$:
\begin{subequations}
\begin{align}
  &{(\nabla^s \tensorTwo{\eta}, \tensorFour{C}_{\text{dr}}:\nabla^s \tensorOne{u})}_{\Omega}-{(\text{div} \; \tensorTwo{\eta},bp)}_{\Omega}
  ={ ( \tensorTwo{\eta}, \bar{\tensorOne{t}} ) }_{\Gamma_\sigma} &&\forall \tensorTwo{\eta} \in \boldsymbol{\mathcal{U}}_0 , \label{momentumBalanceW}\\
  &{(\tensorTwo{\phi}, \mu \tensorTwo{\kappa}^{-1} \cdot \tensorOne{q})}_{\Omega} - {(\text{div} \; \tensorTwo{\phi}, p)}_{\Omega}
  = - {(\tensorTwo{\phi} \cdot \tensorOne{n}, \bar{p})}_{\Gamma_p}  &&\forall \tensorTwo{\phi} \in \boldsymbol{\mathcal{Q}}_0, \label{darcyW} \\
  &{(\chi, b \; \text{div} \; \dot{\tensorOne{u}})}_{\Omega} + {(\chi, \text{div} \; \tensorOne{q})}_{\Omega} + {(\chi, S_{\epsilon} \dot{p})}_{\Omega} =
  {(\chi,f)}_{\Omega} &&\forall \chi \in L^2(\Omega).\label{massBalanceW}
\end{align}\label{eq:IBVP_cW}\null
\end{subequations}
The final fully-discrete form of the continuous problem \eqref{eq:IBVP_cW} is obtained by introducing a non-overlapping partition
$\mathcal{T}_h$ of the spatial domain $\Omega$ with the associated finite dimensional counterparts $\boldsymbol{\mathcal{U}}^h$ 
and $\boldsymbol{\mathcal{Q}}^h$ of the spaces \eqref{eq:functionSpaces}, and $\mathcal{P}^h$ of $L^2(\Omega)$. Then,
discretization in time is performed by a backward Euler scheme.

\subsection{Mixed Finite Element (MFE) discretization}
Let $\{\tensorTwo{\eta}\}$, $\{\tensorTwo{\phi}\}$, and $\{\chi\}$ be the set of basis function of the discrete spaces
$\boldsymbol{\mathcal{U}}^h$, $\boldsymbol{\mathcal{Q}}^h$, and $\mathcal{P}^h$, respectively. For example, a very
common choice is based on the triplet $\mathbb{Q}_1-\mathbb{RT}_0-\mathbb{P}_0$, with $\mathbb{Q}_1$ the space
of bi- or tri-linear piecewise polynomials, according to the size $d$ of the domain $\Omega$, $\mathbb{RT}_0$ the
lowest-order Raviart-Thomas space, and $\mathbb{P}_0$ the space of piecewise constant functions. Denoting by $u_t\in\mathbb{R}^n$,
$p_t\in\mathbb{R}^m$, and $q_t\in\mathbb{R}^p$ are the vectors of discrete displacement, pressure, and velocity values at the time
$t$, the fully-discrete form of \eqref{eq:IBVP_cW} reads:
\begin{equation}
	\begin{bmatrix}
		A_{uu} & A_{up} & 0 \\
		A_{pu} & A_{pp} & \Delta t A_{pq} \\
	0 & A_{qp} & A_{qq} \end{bmatrix} 
	\begin{bmatrix} u_t \\ p_t \\ q_t \end{bmatrix} =
		\begin{bmatrix} b_u \\ b_p \\ b_q \end{bmatrix}
			\label{MFE}
\end{equation}
where $\Delta t$ is the time discretization step and:
\begin{subequations}
\begin{align}
  A_{uu} &= (\nabla^s \tensorTwo{\eta}, \tensorFour{C}_{\text{dr}} : \nabla^s \tensorTwo{\eta})_\Omega, &
	A_{up} &= -(\text{div} \; \tensorTwo{\eta}, b \chi)_\Omega, & \\
  A_{pu} &= (\chi, b \text{div} \; \tensorTwo{\eta})_\Omega, &
	A_{pp} &= (\chi, S_{\epsilon} \chi)_\Omega, &
  A_{pq} &= (\chi, \text{div} \; \tensorTwo{\phi})_\Omega, \\
  &&
	A_{qp} &= -(\text{div} \; \tensorTwo{\phi}, \chi)_\Omega, &
  A_{qq} &= ( \tensorTwo{\phi}, \mu \tensorTwo{\kappa}^{-1} \cdot \tensorTwo{\phi})_\Omega,
\end{align}
\label{eq:MFE_matrix}\null
\end{subequations}
with:
\begin{eqnarray}
	b_u &=& ( \tensorTwo{\eta}, \bar{\tensorOne{t}}_t )_{\Gamma_\sigma}, \label{bu} \\
	b_p &=& (\chi,b \text{div}\;u_{t-1} + S_{\epsilon} p_{t-1} + \Delta t f_t)_{\Omega}, \label{bp} \\
	b_q &=& -(\tensorTwo{\phi} \cdot \tensorOne{n}, \bar{p}_t)_{\Gamma_p}. \label{bq}
\end{eqnarray}
Notice that $A_{uu}$ and $A_{qq}$ are SPD, $A_{pp}$ is SPSD since $S_{\epsilon}$ can be 0 in the limiting case of
incompressible fluid, $A_{up}=-A_{pu}^T$, and $A_{pq}=-A_{qp}^T$. It is well-known (see, for instance~\cite{Rod_etal18})
that the $\mathbb{Q}_1-\mathbb{RT}_0-\mathbb{P}_0$ triplet can be unstable close to undrained conditions, i.e.,
small permeability or time integration step with incompressible fluid. To avoid such an occurrence, a stabilization
contribution can be introduced to annihilate the spurious checkerboard modes arising in the pressure 
solution. For example, here we modify the $A_{pp}$ term by adding an SPSD jump-jump stabilization matrix $A_{stab}$ computed
over sets of macro-elements, as proposed in \cite{Ferronato2021419,Frigo202136}.

It is easy to verify that the fully-discrete problem \eqref{MFE} can be just slightly modified to possess the double 
saddle-point structure \eqref{Eq1}. In fact, by multiplying by $-1$ and $\Delta t$ the second and third row, respectively,
we build the system matrix in \eqref{Eq1} with:
\begin{equation}
	A=A_{uu}, \quad B=-A_{pu}, \quad C=\Delta t A_{pq}, \quad D=A_{pp}+A_{stab}, \quad E=\Delta t A_{qq}.
	\label{MFE_dsp}
\end{equation}

\subsection{Mixed Hybrid Finite Element (MHFE) discretization} 

The hybrid formulation is obtained by replacing the discrete function space $\boldsymbol{\mathcal{Q}}^h$ for Darcy's
velocity representation with that obtained by using discontinuous piecewise polynomials defined on every element
$T$ of the partition $\mathcal{T}_h$:
\begin{equation}
\boldsymbol{\mathcal{W}}^h = \{\tensorOne{w}^h \in [L^{2}(\Omega)]^d \ \ | \ \
   \tensorOne{w}^h|_T \in {[\mathbb{RT}_0(T)]}, \; \forall T \in \mathcal{T}_h  \}.
	\label{eq:spaceW}
\end{equation}
The continuity of normal fluxes through inter-element edges or faces is enforced by Lagrange multipliers $\pi$, which are 
discretized through piecewise constant functions over each edge or face $e$. From a physical viewpoint, $\pi$
represent a pressure value, hence for consistency, it must be $\pi|_{\Gamma_p}=\bar{p}$. Denoting by $\mathcal{B}^h$ 
the finite-dimensional function space for $\pi$ and by $\mathcal{B}_0^h$ its counterpart taking homogeneous values
along $\Gamma_p$, the semi-discrete weak form \eqref{eq:IBVP_cW} becomes: find $\{\tensorOne{u}(t), \tensorOne{w}(t),
p(t), \pi(t)\}\in\boldsymbol{\mathcal{U}}^h\times\boldsymbol{\mathcal{W}}^h\times\mathcal{P}^h\times\mathcal{B}^h$
such that $\forall t\in]0,t_{\max}[$:
\begin{subequations}
\begin{align}
  &{(\nabla^s \tensorTwo{\eta}, \tensorFour{C}_{\text{dr}}:\nabla^s \tensorOne{u})}_{\Omega}-{(\text{div}\;\tensorTwo{\eta},bp)}_{\Omega}
   ={(\tensorTwo{\eta}, \bar{\tensorOne{t}})}_{\Gamma_\sigma} &&\forall \tensorTwo{\eta} \in \boldsymbol{\mathcal{U}}^h_0,\\
  &{(\tensorTwo{\varphi}, \mu \tensorTwo{\kappa}^{-1} \cdot \tensorOne{w})}_{\Omega} - \sum_{T\in\mathcal{T}_h}\left [{(\text{div} \;
   \tensorTwo{\varphi}, p)}_{T}
   - {(\tensorTwo{\varphi} \cdot \tensorOne{n}_e, \pi)}_{\partial T}\right ] = 0
   &&\forall \tensorTwo{\varphi} \in \boldsymbol{\mathcal{W}}^h, \\
	&{(\chi, b \; \text{div} \; \dot{{\tensorOne{u}}} )}_{\Omega} + \sum_{T\in\mathcal{T}_h} {(\chi, \text{div} \; \tensorOne{w})}_{T} + {(\chi, S_{\epsilon} \dot{p})}_{\Omega} =
   {(\chi,{f})}_{\Omega} &&\forall \chi \in \mathcal{P}^h, \\
  &\sum_{T \in \mathcal{T}_h} {(\zeta^h, \tensorOne{w} \cdot \tensorOne{n}_e)}_{\partial T}={(\zeta,\bar{q})}_{\Gamma_q} && \forall \zeta \in \mathcal{B}_0^h.
\end{align}\label{eq:IBVP_WH}\null
\end{subequations}
In \eqref{eq:IBVP_WH}, $\{\tensorTwo{\varphi}\}$ and $\{\zeta\}$ are the set of basis functions of the spaces 
$\boldsymbol{\mathcal{W}}^h$ and $\mathcal{B}^h$, respectively. The MHFE fully discrete weak form 
is finally obtained by introducing a backward Euler discretization of the time variables. Denoting by $w_t\in\mathbb{R}^{2p}$
and $\pi_t\in\mathbb{R}^p$ the vectors of discrete velocity and Lagrange multiplier values at time $t$, from \eqref{eq:IBVP_WH}
we obtain:
\begin{equation}
	\begin{bmatrix}
		A_{uu} & 0 & A_{up} & 0 \\
		0 & A_{ww} & A_{wp} & A_{w\pi} \\
		A_{pu} & \Delta t A_{pw} & A_{pp} & 0 \\
	0 & A_{\pi w} & 0 & 0 \end{bmatrix}
	\begin{bmatrix} u_t \\ w_t \\ p_t \\ \pi_t \end{bmatrix} =
		\begin{bmatrix} b_u \\ 0 \\ b_p \\ b_\pi \end{bmatrix},
			\label{MHFE_full}
\end{equation}
where the additional matrices with respect to \eqref{eq:MFE_matrix} are:
\begin{subequations}
\begin{align}
  A_{ww} &= ( \tensorTwo{\varphi}, \mu \tensorTwo{\kappa}^{-1} \cdot \tensorTwo{\varphi})_\Omega , &
        A_{wp} &= -\sum_{T\in\mathcal{T}_h}(\text{div} \; \tensorTwo{\varphi}, \chi)_T, & 
	A_{w\pi} &= \sum_{T\in\mathcal{T}_h}( \tensorTwo{\varphi} \cdot \tensorOne{n}_e, \zeta )_{\partial T} \\
	A_{pw} &= \sum_{T\in\mathcal{T}_h} (\chi, b \text{div} \; \tensorTwo{\varphi})_T, && \\
	A_{\pi w} &= \sum_{T\in\mathcal{T}_h} (\zeta, \tensorTwo{\varphi} \cdot \tensorOne{n}_e)_{\partial T}, &&
\end{align}
\label{eq:MHFE_matrix}\null
\end{subequations}
and:
\begin{equation}
	b_\pi = {(\zeta,\bar{q}_t)}_{\Gamma_q}.
	\label{bpi}
\end{equation}
Notice that $A_{ww}$ is SPD and block diagonal, while $A_{wp}=-A_{pw}^T$ and $A_{w\pi}=A_{\pi w}^T$. With the low-order
triplet $\mathbb{Q}_1-\mathbb{RT}_0-\mathbb{P}_0$ a stabilization is still necessary, so the SPSD contribution $A_{stab}$
is added to $A_{pp}$ as with the MFE discretization. The block system \eqref{eq:MHFE_matrix} can be reduced by static 
condensation eliminating the $w_t$ variables:
\begin{equation}
	\begin{bmatrix}
		A_{uu} & A_{up} & 0 \\
		A_{pu} & A_{pp} + A_{stab} - \Delta t A_{pw} A_{ww}^{-1} A_{wp} & \Delta t A_{pw} A_{ww}^{-1} A_{w\pi} \\
	0 & A_{\pi w} A_{ww}^{-1} A_{wp} & A_{\pi w} A_{ww}^{-1} A_{w\pi} \end{bmatrix}
	\begin{bmatrix} u_t \\ p_t \\ \pi_t \end{bmatrix} =
		\begin{bmatrix} b_u \\ b_p \\ -b_\pi \end{bmatrix}.
			\label{MHFE}
\end{equation}
As with the MFE discretization, the fully-discrete problem \eqref{MHFE} can be easily modified to possess the double 
saddle-point structure \eqref{Eq1}. By multiplying by $-1$ and $\Delta t$ the second and third row, respectively,
we build the matrix in \eqref{Eq1} with:
\begin{equation}
        A=A_{uu}, \; B=-A_{pu}, \; C=\Delta t A_{\pi w} A_{ww}^{-1} A_{wp}, \; D=A_{pp}+A_{stab}-
	\Delta t A_{pw} A_{ww}^{-1} A_{pw}^T, \; E=\Delta t A_{\pi w} A_{ww}^{-1} A_{w\pi}.
        \label{MHFE_dsp}
\end{equation}

\section{Numerical results}
A set of numerical experiments, concerning the mixed form of Biot's poroelasticity equations introduced in Section \ref{SecBiot}, is used to investigate the quality of the bounds on the eigenvalues of the preconditioned matrix and the effectiveness of the proposed triangular preconditioners on a problem with increasing size of the matrices involved. 
As a reference test case for the experiments, we considered the porous cantilever beam problem originally introduced in \cite{WheelPhil} and already used in \cite{Frigo202136}. The domain is the unit square or cube for the 2-D and 3-D case, respectively, with no-flow boundary conditions along all sides, zero displacements along the left edge, and a uniform load applied on top. Material properties are summarized in Table \ref{prop}.

\begin{table}
	\caption{Cantilever beam test case: material properties.}
	\label{prop}
\begin{center}
\begin{tabular}{lll}
	Parameter  & Value  & Unit \\
	\hline
    Time integration step ($\Delta t$)& $1\times 10^{-5}$ & [s] \\
	Young’s modulus ($E$)& $\red 1 \times 10^{5}$ & [Pa] \\
	Poisson’s ratio ($\nu$)& 0.4 & [--] \\
	Biot’s coefficient ($b$)& 1.0 & [--] \\
	Constrained specific storage ($S_\varepsilon$)& 0 & [Pa] \\
	Isotropic permeability ($\kappa$) &$1 \times 10^{-7}$ & [$m^2$] \\
	Fluid viscosity ($\mu$)& $1 \times 10^3$ & [Pa $\cdot$ s] \\
\end{tabular}
\end{center}
\end{table}

First, the 2-D version of the cantilever beam problem is used to compute the full eigenspectrum of the preconditioned matrix $\mathcal{A}\mathcal{P}^{-1}$ with both the MFE and MHFE discretizations of the mixed Biot's poroelasticity equations. The eigenspectrum is compared against the bounds introduced in Section \ref{sec:eig_tri} for a relatively small-sized matrix, thus validating the theoretical outcomes. The bounds on the eigenvalues of the preconditioned matrix $\mathcal{A}\mathcal{P}_D^{-1}$ given in Section \ref{sec:eig_diag} are also used to compare the expected performance of GMRES and MINRES. Then, the 3-D version of the cantilever beam problem, discretized by the MHFE formulation, is used with increasing size of the test matrices to investigate the efficiency and scalability of the proposed preconditioners.

\subsection{Eigenvalue bounds validation}
Let us consider a uniform discretization of the unit square with spacing $h=1/40$, such that
the sizes of the matrix blocks in $\mathcal{A}$ are $n=3362$, $m=1600$, and $p=3200$.
To check the relative error, 
we use a randomly manufactured solution with the corresponding right-hand side.
Both a right-preconditioned GMRES with $\mathcal{P}$ and a preconditioned MINRES with $\mathcal{P}_D$ are used as a solver.
The exit test is on the 2-norm of the residual vector normalized to its initial value, with a tolerance equal to either $10^{-13}$ (right-preconditioned GMRES) or $10^{-10}$ (preconditioned MINRES),
to obtain comparable relative errors in the solution. The initial guess is the null vector.
{
For the numerical experiments, we use a MATLAB implementation run on an Intel Core Ultra 7 165U Notebook at 3.8 GHz with 16 GB RAM.}

\subsubsection{MFE discretization}
The system matrix \eqref{Eq1} 
has the blocks defined as in \eqref{MFE_dsp}.
To set up the preconditioner, we
have to define the approximations $\widehat A$, $\widehat S$, and $\widehat X$. 

Since $A$ is a standard elasticity matrix, an appropriate off-the-shelf approximation $\widehat A$ can be computed by using an Algebraic MultiGrid (AMG) tool. In this case, we used the Matlab function \texttt{hsl\_mi20}.
Regarding the approximation $\widehat S$ of the first-level Schur complement matrix $\widetilde S=D+B\widehat{A}^{-1}B^T$, we first observe that it is not possible to compute explicitly $\widetilde S$ because $\widehat{A}^{-1}$ is not available. 
A simple and common purely algebraic strategy with saddle-point problems relies on replacing $\widetilde S$ with:
\begin{equation}
    \widetilde S^{(1)} = D + B (\text{diag} (A ))^{-1} B^T,
    \label{eq:tS1}
\end{equation}
then setting $\widehat S$ as the incomplete Cholesky factorization of $\widetilde S^{(1)}$ with no fill-in.
For the second-level Schur complement matrix $\widetilde X=E+C\widehat{S}^{-1}C^T$, we can follow a similar strategy, where $\widehat{S}^{-1}$ is approximated with the inverse of $\text{diag}(\widetilde{S}^{(1)})$ and then $\widehat X$ is the incomplete factorization of $\widetilde X$ with no fill-in.
We denote the block triangular preconditioner built with these choices as $\mathcal{P}^{(1)}$, while the corresponding block diagonal option is $\mathcal{P}^{(1)}_D$.

In this case, the intervals $I_*$ for the indicators $\gamma_*$ read:

\vspace{0.5cm}
\begin{center}
	\begin{tabular}{lllllllll}
		 $I_A=$ & $[5.01\times 10^{-5}$, & $ 1.035]$  \\[.4em]
		 $I_S=$ & $[0.448$, & $ 3.515]$  & \
		 $I_R=$ & $[7.7\times 10^{-3} $, & $ 3.470]$  & \
		 $I_D=$ & $[5.3\times 10^{-3} $, & $ 0.983]$ \\[.4em]
		 $I_X=$ & $[0.999             $, & $ 1.030]$ & \
		 $I_K=$ & $[4.3\times 10^{-5} $, & $ 0.035]$  & \
		 $I_E=$ & $[0.995            $  & $ 1.000]$ 
	\end{tabular}
	\end{center}

\vspace{0.5cm}
\noindent
Using Theorem \ref{Theorem_real} and \ref{Theorem_real_D},
the indicators above give the eigenvalue bounds for the real eigenvalues reported in Table \ref{tab:eigen}. These bounds are compared to the exact spectral intervals of $\mathcal{A}\mathcal{P}^{(1),-1}$ and $\mathcal{A}\mathcal{P}_D^{(1),-1}$.
We observe that the accuracy of the bounds is extremely good; they are sufficiently tight, particularly for the eigenvalues that are close to 0.

\begin{table}
        \label{tab:eigen} 
        \begin{center}
        \caption{Eigenvalues and bounds of the preconditioned matrix for the block triangular and block diagonal preconditioners.}
\begin{tabular}{lcc}
    & Bounds & True eigenvalues \\
        \hline
        $\mathcal{A}\mathcal{P}^{(1),-1}$& [5.0098e-05, 5.5536] & [5.0113e-05, 3.4649] \\
        $\mathcal{A}\mathcal{P}_D^{(1),-1}$& [-2.8548, \textcolor{red}{-0.3021}]$\cup$[5.0098e-05, 3.9014] & [-1.4950, -0.3652]$\cup$[5.0105e-05, 2.3850] \\
    \hline
    $\mathcal{A}\mathcal{P}^{(1,\omega),-1}$& [5.0098e-05, 2.3865] & [5.0113e-05, 1.0346] \\
        $\mathcal{A}\mathcal{P}_D^{(1,\omega),-1}$ & [-0.9672, \textcolor{red}{-0.0415}]$\cup$[5.0098e-05, 2.6262] & [-0.2971,  -0.0440]$\cup$[5.0105e-05, 1.2381] \\
        \hline 
    $\mathcal{A}\mathcal{P}^{(2),-1}$ & [5.0098e-05, 3.5249] & [5.0113e-05, 1.4321] \\ 
        $\mathcal{A}\mathcal{P}_D^{(2),-1}$ & [-1.7058, \textcolor{red}{-0.2978}]$\cup$[5.0098e-05, 3.1750] & [-0.9295, -0.3608]$\cup$[5.0105e-05, 1.6997] \\        \hline
    $\mathcal{A}\mathcal{P}^{-1}$ & \r [5.0098e-05, 3.3065] & \r [5.0113e-05, 1.0346] \\
        $\mathcal{A}\mathcal{P}_D^{-1}$ & \r [-1.3486, \textcolor{red}{-0.2166}]$\cup$\r [5.0098e-05, 3.0332] & \r [-0.8323, -0.2826]$\cup$\r [5.0105e-05, 1.415] \\
\end{tabular}
\end{center}
\end{table}

The real spectral interval turns out to be quite wide for both preconditioners, especially in consideration of the small size of the problem at hand.
The reason for this occurrence is twofold: (i) the interval $I_A$ has a very small left extreme, and (ii) the intervals $I_A$ and $I_S$ do not overlap. 
While the first issue can be simply addressed by improving the quality of the off-the-shelf AMG approximation of $A$, the second issue can be fixed by scaling $\widehat S$ by a factor $\omega^{-1} > 1$, similarly to the strategy used in \cite{bm11cmame}. 
This will proportionally decrease $\gamma_{\max}^S$ and $\gamma_{\max}^R$, thus reducing the real spectral
interval for the triangular preconditioner. Hence, we define
$ \widehat S(\omega) = \omega^{-1} \widehat S,$ and use, for instance, $\omega = 10^{-1}$, with the new preconditioners denoted as $\mathcal{P}^{(1,\omega)}$ and $\mathcal{P}_D^{(1,\omega)}$.
The new indicator intervals become:

\vspace{0.5cm}
\begin{center}
	\begin{tabular}{lllllllll}
		 $I_S=$ & $[0.045$, & $ 0.352]$  & \
		 $I_R=$ & $[7.7\times 10^{-4} $, & $ 0.347]$  & \
		 $I_D=$ & $[5.3\times 10^{-4} $, & $ 0.098]$ \\[.4em]
		 $I_X=$ & $[0.999             $, & $ 1.030]$ & \
		 $I_K=$ & $[4.3\times 10^{-6} $, & $ 0.004]$  & \
		 $I_E=$ & $[0.999            $,  & $ 1.000]$ 
	\end{tabular}
\end{center}

\vspace{0.5cm}
\noindent
from which we obtain the bounds for the real eigenvalues shown in Table \ref{tab:eigen}.
It can be seen that the spectral interval for the real positive eigenvalues has been narrowed.
In contrast, reducing the indicators $\gamma_*^S, \gamma_*^R$ and $\gamma_*^D$ slightly widens the region of purely complex eigenvalues, as shown in Figure \ref{complex}. 
In total, the behavior of the Krylov solvers is positively influenced by the reduced real spectral intervals, as confirmed by the convergence results shown in Table \ref{tab:results}.

\begin{figure}
		\label{complex}
	\includegraphics[width=8cm]{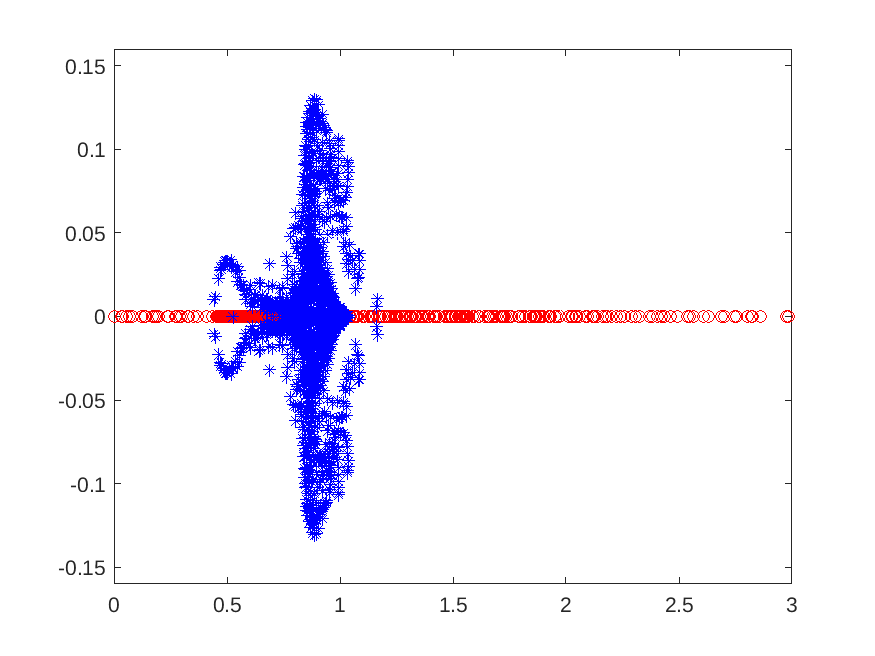}
		\hspace{-4mm}
	\includegraphics[width=8cm]{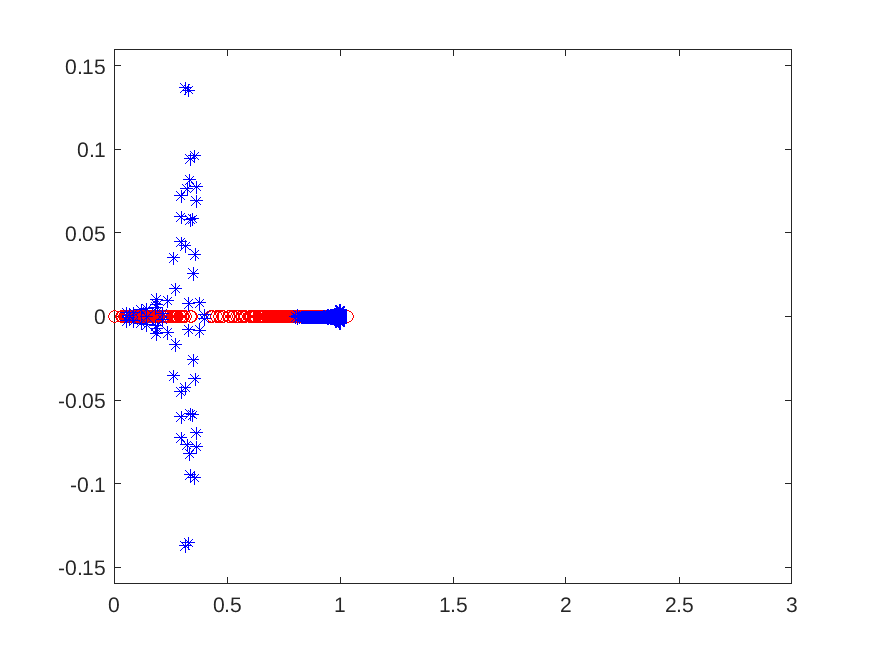}
		\caption{MFE discretization: eigenvalue distribution for $\mathcal{A}\mathcal{P}^{(1),-1}$ (left) and $\mathcal{A}\mathcal{P}^{(1,\omega),-1}$ with $\omega = 10^{-1}$ (right). The red and blue markers denote the real and complex eigenvalues, respectively.}
\end{figure}

\begin{table}
    \label{tab:results}
    \caption{Number of iterations $n_{it}$ and CPU time $T$ of right-preconditioned GMRES and preconditioned MINRES with block triangular and block diagonal preconditioners.}
    \begin{center}
    \begin{tabular}{l|lrr|lrr}
    & \multicolumn{3}{c|}{GMRES} & \multicolumn{3}{c}{MINRES} \\
    & prec. & $n_{it}$ & $T$ [s] & prec. & $n_{it}$ & $T$ [s] \\
    \hline    
	& $\mathcal{P}^{(1)}$ & 85 & 0.95 & $\mathcal{P}_D^{(1)}$ & 227 & 2.12 \\
	  MFE & $\mathcal{P}^{(1,\omega)}$ & 62 & 0.69 & $\mathcal{P}_D^{(1,\omega)}$ & 167 & 1.60 \\
    & $\mathcal{P}^{(2)}$ & 62 & 0.72 & $\mathcal{P}_D^{(2)}$ & 142 & 1.39 \\
    \hline
    MHFE & $\mathcal P$ & \r 54 &  \r 0.72 & $\mathcal{P}_D$ & \r 148 & \r 1.51 \\
    \end{tabular}
    \end{center}
\end{table}
In alternative to equation \eqref{eq:tS1}, a second way to approximate the first-level Schur complement relies on replacing $\widetilde S$ with: 
\begin{equation}
    \widetilde{S}^{(2)} = D + S_K,
    \label{eq:tS2}
\end{equation}
where $S_K$ is the diagonal ``fixed-stress'' matrix, introduced as a preconditioner in \cite{WCT2016,CasWhiFer16}, which is a suitable scaling of the lumped mass matrix of the pressure shape functions. The lumped structure of the approximation follows from the nature of the differential operators underlying $A$ and $B$, which represent a discrete div-grad and a discrete divergence operator, respectively, while the value of the diagonal entries of $S_K$:
\begin{equation}
    [S_K]_i = \frac{b^2 \left|\Omega_i\right|}{\overline{K}_i}, \qquad i=1,\ldots,p,
    \label{eq:SKi}
\end{equation}
where $|\Omega_i|$ is the measure of the $i$-th finite element and $\overline{K}_i$ is an estimate of the associated bulk modulus, is dictated by physical considerations \cite{WCT2016}.
A more general algebraic strategy for computing the entries \eqref{eq:SKi} can be implemented following the ideas sketched in \cite{CasWhiFer16}. Let us denote with $\mathcal{N}^{(i)}$ the subset of $\mathcal{N}=\{1,2,\ldots,n\}\subset\mathbb{N}$ containing the indices of the non-zero components of the $i$-th row $B^{(i)}$ of $B$, and define $R^{(i)}$ as the restriction operator from $\mathbb{R}^n$ to $\mathbb{R}^{|\mathcal{N}^{(i)}|}$ such that:
\begin{equation}
    A^{(i,i)} = R^{(i),T} A R^{(i)}
    \label{eq:Aii}
\end{equation}
is the sub-matrix of $A$ made by the entries lying in the rows and columns with indices in $\mathcal{N}^{(i)}$. Since $A^{(i,i)}$ is a diagonal symmetric block of an SPD matrix, it is non-singular and can be regularly inverted. The $i$-th diagonal entry of $S_K$ can be algebraically computed as:
\begin{equation}
    [S_K]_i = r(B^{(i)}) A^{(i,i),-1} r(B^{(i),T}), \qquad i=1,\ldots,p,
    \label{eq:algSKi}
\end{equation}
where $r(\cdot)$ is the operator restricting a vector to its non-zero entries.

Since $S_K$ is diagonal, the first-level Schur complement approximation $\widetilde{S}^{(2)}$ of equation \eqref{eq:tS2} is diagonally dominant and $\widehat S$ can be simply set as $\text{diag}(\widetilde{S}^{(2)})$. As a consequence, the second-level Schur complement matrix $\widetilde{X}=E+C\widehat{S}^{-1}C^T$ can be explicitly computed, with $\widehat{X}$ defined again as the incomplete factorization of $\widetilde{X}$ with no fill-in. The block triangular and block diagonal preconditioners built with these choices as denoted as $\mathcal{P}^{(2)}$ and $\mathcal{P}_D^{(2)}$, respectively.

Using $\widetilde S^{(2)}$ yields the following intervals for the indicators $\gamma_*$:

\vspace{0.5cm}
\begin{center}
	\begin{tabular}{lllllllll}
		 $I_A=$ & $[5.01\times 10^{-5}$, & $ 1.035]$  \\[.2em]
		 $I_S=$ & $[0.439$, & $ 1.489]$  & \
		 $I_R=$ & $[3.4\times 10^{-3} $, & $ 1.310]$  & \
		 $I_D=$ & $[4.7\times 10^{-3} $, & $ 0.559]$ \\[.2em]
		 $I_X=$ & $[0.998             $, & $ 1.001]$ & \
		 $I_K=$ & $[4.0\times 10^{-6} $, & $ 0.005]$  & \
		 $I_E=$ & $[0.993            $  & $ 1.003]$
	\end{tabular}
\end{center}
	
\vspace{0.5cm}
\noindent
The bounds and the eigenvalues of the preconditioned matrices $\mathcal{A}\mathcal{P}^{(2)}$ and $\mathcal{A}\mathcal{P}_D^{(2)}$ are provided in Table \ref{tab:eigen}. 
By comparing the bounds with those for $\mathcal{A}\mathcal{P}^{(1,\omega),-1}$ we can predict a similar GMRES convergence,   while the more clustered negative eigenvalues suggest an improved MINRES performance with respect to $\mathcal{P}_D^{(1,\omega)}$.
Both observations are confirmed in
Table \ref{tab:results}. 
Hence, we can conclude that the use of the fixed-stress approximation for the first-level Schur complement is generally helpful. 

\subsubsection{MHFE discretization}
\label{subsection:MHFEresults}
The system matrix \eqref{Eq1} now has the block definition \eqref{MHFE_dsp}.
To set up the preconditioner, we use as $\widehat A$ the same \texttt{hsl\_mi20} AMG operator as before,
while, following the outcome of the MFE numerical experiments, we use the diagonal of the ``fixed-stress'' approximation \eqref{eq:tS2} for $\widehat S$ and the incomplete Cholesky factorization of $\widetilde X$ with no fill in for $\widehat X$. 
The block triangular and block diagonal preconditioners built with these choices are simply denoted as $\mathcal{P}$ and $\mathcal{P}_D$, respectively.

\begin{figure}
	\label{eigpic}
	\centerline{\includegraphics[width=9cm]{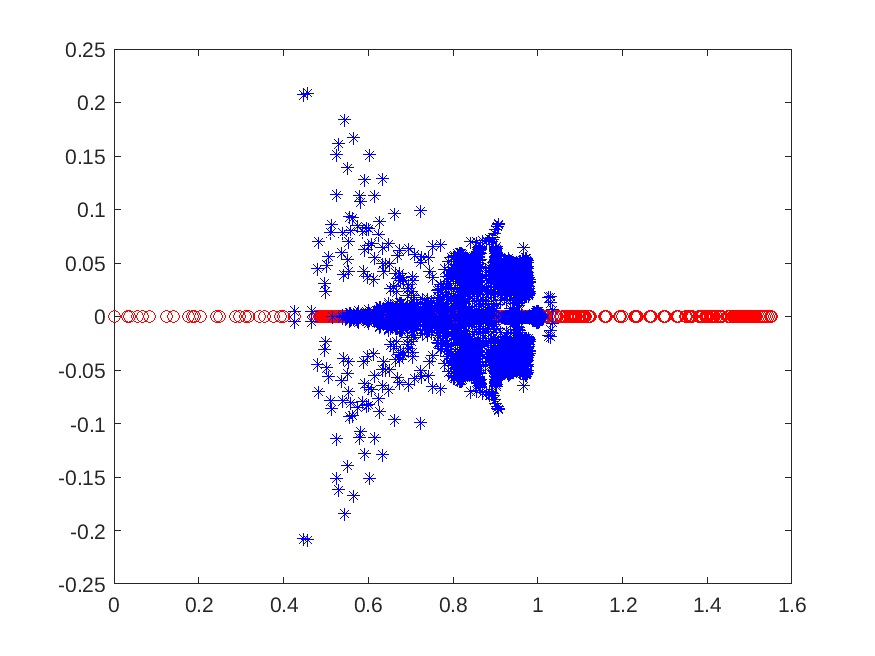}}
	\caption{MHFE discretization: eigenvalue distribution for $\mathcal{A}\mathcal{P}^{-1}$. The red and blue markers denote the real and complex eigenvalues, respectively.}
\end{figure}

The intervals $I_*$ for the indicators $\gamma_*$ read:

\vspace{0.5cm}
\begin{center}
	\begin{tabular}{lllllllll}
		 $I_A=$ & $[5.01\times 10^{-5}$, & $ 1.035]$  \\[.4em]
		 $I_S=$ & $[0.286            $, & $ 1.086]$  & \
		 $I_R=$ & $[3.4\times 10^{-3} $, & $ 0.655]$  & \
		 $I_D=$ & $[3.1\times 10^{-3} $, & $ 0.536]$ \\[.4em]
		 $I_X=$ & $[\r 0.797            $, & $ \r 1.186]$ & \
		 $I_K=$ & $[\r 0                 $  & $ \r 0.005]$  & \
		 $I_E=$ & $[\r 0.795            $  & $ \r 1.186]$
	\end{tabular}
\end{center}
\vspace{0.5cm}
\noindent
\textcolor{mygreen}{from which Theorem \ref{Th6} and Remark 1 yield the bound $|\lambda - 1| \le    \sqrt{1-\gamma_{\min}^D} = 0.998$ for the complex eigenvalues.}
As for the real eigenvalues, we have the bounds and true spectral intervals reported in Table \ref{tab:eigen}.
The eigenvalues of $\mathcal{A}\mathcal{P}^{-1}$ are shown in Figure \ref{eigpic}.
The GMRES and MINRES convergence, shown in Table \ref{tab:results}, is consistent with the eigenvalue distribution, showing an overall similar algebraic behavior between the MFE and MHFE discretizations.

\subsection{Computational efficiency and scalability}
We consider a set of uniform hexahedral MHFE discretizations of the unit cube with spacing $h=0.100, 0.050, 0.025$, which gives rise to the double saddle-point linear systems \eqref{Eq1} whose size and number of nonzeros are reported in Table \ref{tab:size}. 
The linear systems are solved by using either GMRES or MINRES preconditioned with $\mathcal{P}$ or $\mathcal{P}_D$, respectively.
{
	We considered a right-hand side corresponding to an imposed solution of all ones, and the zero vector as the initial guess.}

\begin{table}
        \caption{3D cantilever beam problem on a unit cube: size and number of nonzeros of the test matrices ($N=n+m+p$). }
        \label{tab:size}
\begin{center}
\begin{tabular}{r|rrrrr}
        $1/h$&$n$  & $m$  &$p$   &$N$  & nonzeros\\
        \hline            
        10 & 3993  &\red 1000  &\red 3300  &8293   & 0.3$\times 10^6$\\
        20 &27783  &\red 8000  &\red 25200 &60983  &2.4$\times 10^6$  \\
        40 &206763 &\red 64000 &\red 196800&467563 & 19.2$\times 10^6$ \\
\end{tabular}
\end{center}
\end{table}

\begin{figure}
	\begin{center}
		\includegraphics[width=8cm]{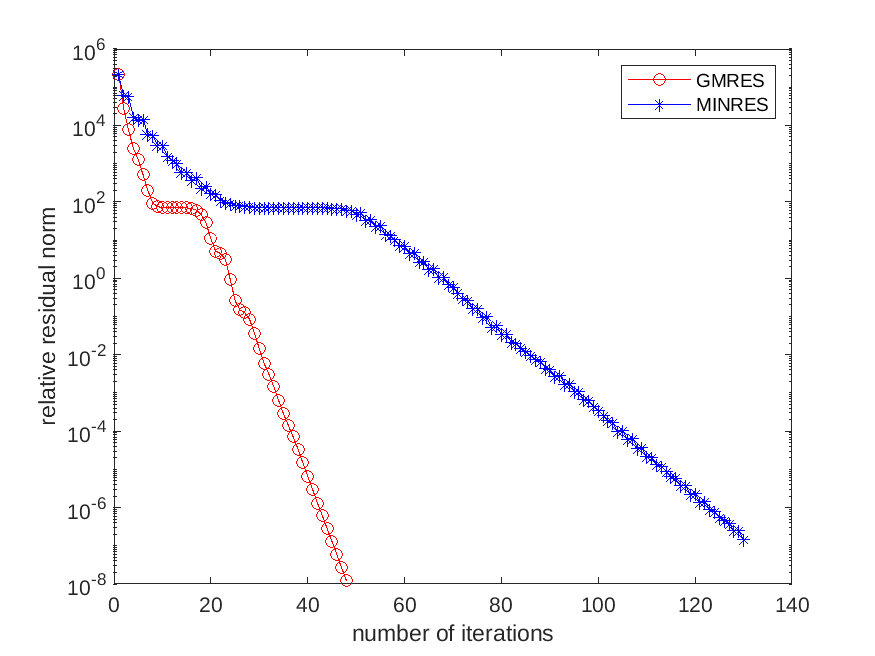}
	\end{center}
		\caption{3D cantilever beam problem on a unit cube: convergence profile of MINRES and GMRES for the finest mesh.}
		\label{convprof40}
\end{figure}

In addition to the \texttt{hsl\_mi20} routine previously mentioned, for $\widehat A$ we also use the Matlab implementation of the
AMG operator belonging to the Chronos software package \cite{Janna1, Janna2}, which is more suited for approximating the elasticity matrix.
Chronos provides a classical AMG implementation with a smoother based on
an adaptive variant of the factorized sparse approximate inverse preconditioner \cite{Jan_etal15}. For the elasticity matrix, the prolongation is
constructed through a least square fit of the rigid body modes and
further improved via energy minimization \cite{Janna3}. The approximations $\widehat S$ and $\widehat X$ of the Schur complements are as in Subsection \ref{subsection:MHFEresults}.

\begin{table}
	\caption{3D cantilever beam problem on a unit cube: number of iterations $n_{it}$ and CPU time $T$ with different AMG approximations for $\widehat A$ obtained from either the HSL routine \texttt{hsl\_mi20} or the Chronos software package. 
	}
	\label{scalability}
\begin{center}
\begin{tabular} {l|c|rrr|rrr}
	&& \multicolumn{3}{c|}{HSL} & \multicolumn{3}{c}{Chronos} \\
	$1/h$ & solver & $n_{it}$ & $T$ [s] & rel. err. & $n_{it}$ & $T$ [s] & rel. err.  \\
\hline
	$10$ & GMRES &  27 & 0.15 & $1.7 \times 10^{-5}$ & 46  & 0.10 & $1.58 \times 10^{-6}$\\
	       & MINRES&  79 & 1.06 & $2.4 \times 10^{-5}$ & 111 & 0.08 & $1.71 \times 10^{-6}$\\
       \hline
	$20$ & GMRES &  38 & 2.63 &$1.0 \times 10^{-5}$& 47  & 0.61 & $1.60 \times 10^{-6}$ \\
	       & MINRES& 135 & 7.86&$1.1 \times 10^{-5}$& 122 & 1.05 & $1.44 \times 10^{-6}$ \\
       \hline
	$40$ & GMRES & 71  &37.26&$1.0 \times 10^{-5}$& 47  & 4.48 & $1.17 \times 10^{-6}$ \\
	       & MINRES& 264 &127.55 & $5.5 \times 10^{-6}$& 129 & 9.21 & $1.09 \times 10^{-6}$ \\
\end{tabular}
\medskip
\end{center}
\end{table}

The convergence results 
are summarized in Table \ref{scalability}. 
We observe that the AMG preconditioner from the Chronos software package performs better than the HSL package in terms of scalability, which is almost perfect with both GMRES and MINRES. This outcome also shows that the approximations introduced for the first- and second-level Schur complements do not affect the overall scalability of $\mathcal{P}$ and $\mathcal{P}_D$, which depends only on the quality of $\widehat A$ as a preconditioner for $A$.
The Preconditioned Conjugate Gradient (PCG) method for solving the linear system with the $(1,1)$ block $A$, accelerated by the Chronos AMG operator, requires 25, 26, and 27 iterations, for $h=0.100, 0.050$ and $0.025$, respectively. 
Therefore, no loss of scalability is introduced by the (simple) approximations of the two Schur complements proposed in this work.
Finally, the results confirm the superior computational efficiency of GMRES over MINRES for our setting, in terms of both convergence rate and CPU time (see also Figure \ref{convprof40}, {where the absolute residual is plotted vs the iteration number}) to obtain
roughly the same relative error, {see the last column of Table \ref{scalability}}.

\section{Conclusions}
Double-saddle point linear systems naturally arise in the monolithic solution to the coupled Biot's poroelasticity equations written in mixed form. 
Concerning the general $3\times 3$ block structure of the matrix $\mathcal{A}$ in equation \eqref{Eq1}, the block $A$ is an SPD elasticity matrix, $D$ and $E$ are mass matrices where $D$ can be singular, $B$ is a discrete divergence, and $C$ a scaled Gram matrix. A family of block triangular preconditioners $\mathcal{P}$ has been developed for this double saddle-point problem by combining physically-based and purely algebraic Schur complement approximations.

In this work, we have analyzed the spectral properties of the preconditioned matrix $\mathcal{A}\mathcal{P}^{-1}$ by extending the results already available for the simpler case with $D=E=0$. 
We have proved that:
\begin{itemize}
    \item the complex eigenvalues, if any, lie in a circle with center $(1,0)$ and radius smaller than 1;
    \item the real eigenvalues are the roots of a third-degree polynomial with real coefficients and are bounded by quantities that depend on the quality of the inner preconditioners $\widehat A$, $\widehat S$, and $\widehat X$ of the $(1,1)$ block $A$, and the first- and second-level Schur complements $S$ and $X$;
    \item the eigenvalues obtained by using a block diagonal preconditioner are clustered in two intervals, one in the negative and one in the positive side of the real axis, with the bounds of the positive interval similar to those obtained with the block triangular preconditioner.
\end{itemize}
The numerical experiments on a set of test problems arising from mixed and mixed hybrid discretizations of Biot's poroelasticity equations have validated the established bounds, which generally turn out to be quite tight. The results that follow are worth summarizing.
\begin{enumerate}
    \item The use of GMRES with a block triangular preconditioner is usually more efficient than MINRES with a block diagonal preconditioner, both in terms of convergence rate and computation time.
    \item Complex eigenvalues have limited influence on the solver convergence. The key factor for the overall performance is the approximation of the elasticity matrix, which controls the lower bound of the eigenspectrum of the preconditioned matrix. 
    \item Another factor that affects the solver convergence is the overlap between the spectral intervals of $\overline A$ and $\overline S$. This is automatically ensured by using a ``fixed-stress'' approximation of $B\widehat{A}^{-1}B^T$ in the computation of the first-level Schur complement. In addition, in this way, $\widehat S$ can be set as a simple diagonal matrix and $\widetilde X$ can be computed exactly. 
    \item The overall scalability of the proposed family of block triangular preconditioners for the mixed form of Biot's poroelasticity equations depends mainly on the scalability of $\widehat A$ as a preconditioner of the elasticity matrix $A$. In this context, the AMG operator provided by the Chronos software package appears to be a valuable tool.
\end{enumerate}

\subsection*{Acknowledgements}
The authors are members of the Gruppo Nazionale Calcolo Scientifico-Istituto Nazionale di Alta Matematica (GNCS-INdAM).
AM gratefully acknowledges the support of the GNCS-INdAM Project CUP$\_$E53C23001670001. The work of AM was carried out within the PNRR research activities of the consortium iNEST (Interconnected North-East Innovation Ecosystem) funded by the European Union Next-GenerationEU
(PNRR -- Missione 4 Componente 2, Investimento 1.5 -- D.D. 1058 23/06/2022, ECS$\_$00000043).
This manuscript reflects only the Authors' views and opinions; neither the European Union nor the European Commission can be considered responsible for them.


\begin{thebibliography}{10}

\bibitem{Adl_etal20}
{\sc J.~H. Adler, F.~J. Gaspar, X.~Hu, P.~Ohm, C.~Rodrigo, and L.~T.
  Zikatanov}, {\em Robust preconditioners for a new stabilized discretization
  of the poroelastic equations}, SIAM J. Sci. Comput., 42 (2020),
  pp.~B761--B791.

\bibitem{Balani-et-al-2023b}
{\sc F.~Bakrani~Balani, L.~Bergamaschi, A.~Mart\'{\i}nez, and M.~Hajarian},
  {\em Some preconditioning techniques for a class of double saddle point
  problems}, Numerical Linear Algebra with Applications, 31, e2551 (2024),
  pp.~1--22.

\bibitem{Balani-et-al-2023a}
{\sc F.~Bakrani~Balani, M.~Hajarian, and L.~Bergamaschi}, {\em Two block
  preconditioners for a class of double saddle point linear systems}, Applied
  Numerical Mathematics, 190 (2023), pp.~155--167.

\bibitem{Benzi2018}
{\sc F.~P.~A. Beik and M.~Benzi}, {\em Iterative methods for double saddle
  point systems}, SIAM J. Matrix Anal. Appl., 39 (2018), pp.~902--921.

\bibitem{BeikBenzi2022}
\leavevmode\vrule height 2pt depth -1.6pt width 23pt, {\em Preconditioning
  techniques for the coupled {S}tokes-{D}arcy problem: spectral and
  field-of-values analysis}, Numer. Math., 150 (2022), pp.~257--298.

\bibitem{BergaNLAA10}
{\sc L.~Bergamaschi}, {\em On eigenvalue distribution of
  constraint-preconditioned symmetric saddle point matrices}, Numer. Linear
  Algebra Appl., {19} (2012), pp.~{754--772}.

\bibitem{bm11cmame}
{\sc L.~Bergamaschi and A.~Mart\'{\i}nez}, {\em {RMCP}: Relaxed mixed
  constraint preconditioners for saddle point linear systems arising in
  geomechanics}, Comp. Meth. Appl. Mech. Engrg, {221--222} (2012),
  pp.~{54--62}.

\bibitem{Bergamaschi-COAP-2024}
{\sc L.~Bergamaschi, A.~Mart\'{\i}nez, J.~Pearson, and A.~Potschka}, {\em
  Spectral analysis of block preconditioners for double saddle-point linear
  systems with application to {PDE}-constrained optimization}, Computational
  Optimization and Applications, 91 (2025), pp.~423--455.

\bibitem{Bio41}
{\sc M.~A. Biot}, {\em {General theory of three-dimensional consolidation}}, J.
  Appl. Phys., 12 (1941), pp.~155--164.

\bibitem{Bot_etal17}
{\sc J.~W. Both, M.~Borregales, J.~M. Nordbotten, K.~Kumar, and F.~A. Radu},
  {\em {Robust fixed stress splitting for Biot's equations in heterogeneous
  media}}, Appl. Math. Lett., 68 (2017), pp.~101--108.

\bibitem{Bradley}
{\sc S.~Bradley and C.~Greif}, {\em Eigenvalue bounds for double saddle-point
  systems}, IMA J. Numer. Anal., 43 (2023), pp.~3564--3592.

\bibitem{Bui_etal20}
{\sc Q.~M. Bui, D.~Osei-Kuffuor, N.~Castelletto, and J.~A. White}, {\em A
  scalable multigrid reduction framework for multiphase poromechanics of
  heterogeneous media}, SIAM Journal on Scientific Computing, 42 (2020),
  pp.~B379--B396.

\bibitem{Cai}
{\sc M.~Cai, M.~Mu, and J.~Xu}, {\em Preconditioning techniques for a mixed
  {S}tokes/{D}arcy model in porous media applications}, J. Comput. Appl. Math.,
  233 (2009), pp.~346--355.

\bibitem{CasWhiFer16}
{\sc N.~Castelletto, J.~A. White, and M.~Ferronato}, {\em {Scalable algorithms
  for three-field mixed finite element coupled poromechanics}}, J. Comput.
  Phys., 327 (2016), pp.~894--918.

\bibitem{ChenRen}
{\sc F.~Chen and B.-C. Ren}, {\em On preconditioning of double saddle point
  linear systems arising from liquid crystal director modeling}, Appl. Math.
  Lett., 136 (2023).
\newblock Paper No. 108445, 7.

\bibitem{Chen}
{\sc Z.~Chen, Q.~Du, and J.~Zou}, {\em Finite element methods with matching and
  nonmatching meshes for {M}axwell equations with discontinuous coefficients},
  SIAM J. Numer. Anal., 37 (2000), pp.~1542--1570.

\bibitem{Szyld}
{\sc P.~Chidyagwai, S.~Ladenheim, and D.~B. Szyld}, {\em Constraint
  preconditioning for the coupled {S}tokes-{D}arcy system}, SIAM J. Sci.
  Comput., 38 (2016), pp.~A668--A690.

\bibitem{Cowin99}
{\sc S.~C. Cowin}, {\em {Bone poroelasticity}}, J. Biomech., 32 (1999),
  pp.~217--238.

\bibitem{DanWhe18}
{\sc S.~Dana and M.~F. Wheeler}, {\em {Convergence analysis of two-grid fixed
  stress split iterative scheme for coupled flow and deformation in
  heterogeneous poroelastic media}}, Comput. Methods Appl. Mech. Eng., 341
  (2018), pp.~788--806.

\bibitem{FerCasGam10}
{\sc M.~Ferronato, N.~Castelletto, and G.~Gambolati}, {\em {A fully coupled 3-D
  mixed finite element model of Biot consolidation}}, J. Comput. Phys., 229
  (2010), pp.~4813--4830.

\bibitem{Ferronato2022}
{\sc M.~Ferronato, A.~Franceschini, and M.~Frigo}, {\em On the development of
  efficient solvers for real-world coupled hydromechanical simulations},
  Frontiers in Mechanical Engineering, 8 (2022).

\bibitem{FerFraJanCasTch19}
{\sc M.~Ferronato, A.~Franceschini, C.~Janna, N.~Castelletto, and H.~Tchelepi},
  {\em {A general preconditioning framework for coupled multi-physics
  problems}}, J. Comput. Phys., 398 (2019), p.~108887.

\bibitem{Ferronato2021419}
{\sc M.~Ferronato, M.~Frigo, N.~Castelletto, and J.~A. White}, {\em Efficient
  solvers for a stabilized three-field mixed formulation of poroelasticity},
  Lecture Notes in Computational Science and Engineering, 139 (2021), p.~419
  – 427.

\bibitem{FerGamJanTea10}
{\sc M.~Ferronato, G.~Gambolati, C.~Janna, and P.~Teatini}, {\em Geomechanical
  issues of anthropogenic {CO}2 sequestration in exploited gas fields}, Energy
  Conv. Manag., 51 (2010), pp.~1918--1928.

\bibitem{Janna3}
{\sc A.~Franceschini, V.~A. Paludetto~Magri, G.~Mazzucco, N.~Spiezia, and
  C.~Janna}, {\em A robust adaptive algebraic multigrid linear solver for
  structural mechanics}, Comput. Meth. Appl. Mech. Eng., 352 (2019),
  pp.~389--416.

\bibitem{FriCasFer19}
{\sc M.~Frigo, N.~Castelletto, and M.~Ferronato}, {\em {A Relaxed Physical
  Factorization Preconditioner for mixed finite element coupled poromechanics
  }}, SIAM J. Sci. Comput., 41 (2019), pp.~B694--B720.

\bibitem{Frigo-et-al}
{\sc M.~Frigo, N.~Castelletto, and M.~Ferronato}, {\em Enhanced relaxed
  physical factorization preconditioner for coupled poromechanics}, Comput.
  Math. with Appl., 106 (2022), pp.~27--39.

\bibitem{Frigo202136}
{\sc M.~Frigo, N.~Castelletto, M.~Ferronato, and J.~A. White}, {\em Efficient
  solvers for hybridized three-field mixed finite element coupled
  poromechanics}, Computers and Mathematics with Applications, 91 (2021), p.~36
  – 52.

\bibitem{Fri00}
{\sc A.~Frijns}, {\em {A four-component mixture theory applied to cartilaginous
  tissues: numerical modelling and experiments}}, phd thesis, Technische
  Universiteit Eindhoven, The Netherlands, 2000.

\bibitem{GirKumWhe16}
{\sc V.~Girault, K.~Kumar, and M.~F. Wheeler}, {\em {Convergence of iterative
  coupling of geomechanics with flow in a fractured poroelastic medium}},
  Comput. Geosci., 20 (2016), pp.~997--1011.

\bibitem{Han}
{\sc D.~Han and X.~Yuan}, {\em Local linear convergence of the alternating
  direction method of multipliers for quadratic programs}, SIAM J. Numer.
  Anal., 51 (2013), pp.~3446--3457.

\bibitem{HonKra18}
{\sc Q.~Hong and J.~Kraus}, {\em {Parameter-robust stability of classical
  three-field formulation of Biot's consolidation model}}, Electron. Trans.
  Numer. Anal., 48 (2018), pp.~202--226.

\bibitem{Hong2020916}
{\sc Q.~Hong, J.~Kraus, M.~Lymbery, and M.~F. Wheeler}, {\em Parameter-robust
  convergence analysis of fixed-stress split iterative method for
  multiple-permeability poroelasticity systems}, Multiscale Modeling and
  Simulation, 18 (2020), p.~916 – 941.

\bibitem{Hu_etal17}
{\sc X.~Hu, C.~Rodrigo, F.~J. Gaspar, and L.~T. Zikatanov}, {\em {A
  nonconforming finite element method for Biot's consolidation model in
  poroelasticity}}, J. Comput. Appl. Math., 310 (2017), pp.~143--154.

\bibitem{Janna2}
{\sc G.~Isotton, M.~Frigo, N.~Spiezia, and C.~Janna}, {\em Chronos: A general
  purpose classical amg solver for high performance computing}, SIAM Journal on
  Scientific Computing, 43 (2021), pp.~C335--C357.

\bibitem{Jan_etal15}
{\sc C.~Janna, M.~Ferronato, F.~Sartoretto, and G.~Gambolati}, {\em {FSAIPACK:
  A software package for high-performance factopred sparse approximate inverse
  preconditioning}}, ACM Trans. Math. Soft., 41 (2015), p.~Article 10.

\bibitem{Janna1}
{\sc C.~Janna, A.~Franceschini, J.~Schroder, and L.~Olson}, {\em Parallel
  energy-minimization prolongation for algebraic multigrid}, SIAM Journal on
  Scientific Computing, 45 (2023), pp.~A2561--A2584.

\bibitem{JhaJua07}
{\sc B.~Jha and R.~Juanes}, {\em {A locally conservative finite element
  framework for the simulation of coupled flow and reservoir geomechanics}},
  Acta Geotech., 2 (2007), pp.~139--153.

\bibitem{KimTchJua11a}
{\sc J.~Kim, H.~A. Tchelepi, and R.~Juanes}, {\em {Stability, accuracy and
  efficiency of sequential methods for coupled flow and geomechanics}}, SPE J.,
  16 (2011), pp.~249--262.

\bibitem{KimTchJua11b}
\leavevmode\vrule height 2pt depth -1.6pt width 23pt, {\em {Stability and
  convergence of sequential methods for coupled flow and geomechanics:
  Fixed-stress and fixed-strain splits}}, Comput. Meth. Appl. Mech. Eng., 200
  (2011), pp.~1591--1606.

\bibitem{Kraus2023B802}
{\sc J.~Kraus, P.~L. Lederer, M.~Lymbery, K.~Osthues, and J.~Schoberl}, {\em
  Hybridized discontinuous galerkin/hybrid mixed methods for a multiple network
  poroelasticity model with application in biomechanics}, SIAM Journal on
  Scientific Computing, 45 (2023), p.~B802 – B827.

\bibitem{LeeMarWin17}
{\sc J.~J. Lee, K.-A. Mardal, and R.~Winther}, {\em {Parameter-Robust
  Discretization and Preconditioning of Biot's Consolidation Model}}, SIAM J.
  Sci. Comput., 39 (2010), pp.~A1--A24.

\bibitem{Lip02}
{\sc K.~Lipnikov}, {\em {Numerical Methods for the Biot Model in
  Poroelasticity. }}, {PhD thesis}, University of Houston, 2002.

\bibitem{LuoZen11}
{\sc Z.~J. Luo and F.~Zeng}, {\em Finite element numerical simulation of land
  subsidence and groundwater exploitation based on visco-elastic-plastic
  {B}iot's consolidation theory}, J. Hydrodyn., 23 (2011), pp.~615--624.

\bibitem{minres}
{\sc C.~C. Paige and M.~A. Saunders}, {\em Solution of sparse indefinite
  systems of linear equations}, SIAM J. on Numer. Anal., 12 (1975),
  pp.~617--629.

\bibitem{PanFreDouZakSheCutTer15}
{\sc L.~Pan, B.~Freifeld, C.~Doughty, S.~Zakem, M.~Sheu, B.~Cutright, and
  T.~Terrall}, {\em {Fully coupled wellbore-reservoir modeling of geothermal
  heat extraction using CO2 as the working fluid}}, Geothermics, 53 (2015),
  pp.~100--113.

\bibitem{PhiWhe07a}
{\sc P.~J. Phillips and M.~F. Wheeler}, {\em {A coupling of mixed and
  continuous Galerkin finite element methods for poroelasticity I: the
  continuous in time case}}, Comput. Geosci., 11 (2007a), pp.~131--144.

\bibitem{PhiWhe07b}
\leavevmode\vrule height 2pt depth -1.6pt width 23pt, {\em {A coupling of mixed
  and continuous Galerkin finite element methods for poroelasticity II: the
  discrete-in-time case}}, Comput. Geosci., 11 (2007b), pp.~145--158.

\bibitem{WheelPhil}
\leavevmode\vrule height 2pt depth -1.6pt width 23pt, {\em Overcoming the
  problem of locking in linear elasticity and poroelasticity: A heuristic
  approach}, Computational Geosciences, 13 (2009), pp.~5--12.

\bibitem{RameshKumar2023583}
{\sc K.~Ramesh~Kumar, H.~T. Honorio, and H.~Hajibeygi}, {\em Fully-coupled
  multiscale poromechanical simulation relevant for underground gas storage},
  Lecture Notes in Civil Engineering, 288 LNCE (2023), p.~583 – 590.

\bibitem{Rhebergen}
{\sc S.~Rhebergen, G.~N. Wells, A.~J. Wathen, and R.~F. Katz}, {\em Three-field
  block preconditioners for models of coupled magma/mantle dynamics}, SIAM J.
  Sci. Comput., 37 (2015), pp.~A2270--A2294.

\bibitem{Rod_etal18}
{\sc C.~Rodrigo, X.~Hu, P.~Ohm, J.~Adler, F.~Gaspar, and L.~Zikatanov}, {\em
  New stabilized discretizations for poroelasticity and the {S}tokes'
  equations}, Comput. Meth. Appl. Mech. Eng., 341 (2018), pp.~467--484.

\bibitem{saad1986gmres}
{\sc Y.~Saad and M.~H. Schultz}, {\em G{MRES}: a generalized minimal residual
  algorithm for solving nonsymmetric linear systems}, SIAM J. Sci. Statist.
  Comput., 7 (1986), pp.~856--869.

\bibitem{TeaFerGamGon06}
{\sc P.~Teatini, M.~Ferronato, G.~Gambolati, and M.~Gonella}, {\em Groundwater
  pumping and land subsidence in the {E}milia-{R}omagna coastland, {I}taly:
  modeling the past occurrence and the future trend}, Water Resour. Res., 42
  (2006).

\bibitem{TurArb14}
{\sc E.~Turan and P.~Arbenz}, {\em {Large scale micro finite element analysis
  of 3D bone poroelasticity}}, Parallel Comput., 40 (2014), pp.~239--250.

\bibitem{WCT2016}
{\sc J.~A. White, N.~Castelletto, and H.~A. Tchelepi}, {\em Block-partitioned
  solvers for coupled poromechanics: A unified framework}, Computer Methods in
  Applied Mechanics and Engineering, 303 (2016), pp.~55--74.

\bibitem{Whi_etal14}
{\sc J.~A. White, L.~Chiaramonte, S.~Ezzedine, W.~Foxall, Y.~Hao, A.~Ramirez,
  and W.~McNab}, {\em {Geomechanical behavior of the reservoir and caprock
  system at the In Salah CO2 storage project}}, Proc. Natl. Acad. Sci., 111
  (2014), pp.~8747--8752.

\bibitem{Yuan}
{\sc J.~Y. Yuan}, {\em Numerical methods for generalized least squares
  problems}, in Proceedings of the {S}ixth {I}nternational {C}ongress on
  {C}omputational and {A}pplied {M}athematics ({L}euven, 1994), vol.~66, 1996,
  pp.~571--584.

\end{thebibliography}
\end{document}